\documentclass{amsart} \usepackage[normalem]{ulem}
\usepackage{enumitem} \usepackage{xcolor} \usepackage{amscd}
\usepackage{amssymb, amsmath}

\usepackage{tikz}
\usetikzlibrary{shapes.multipart, positioning,
decorations.pathreplacing, decorations.markings,
decorations.pathmorphing, calc, matrix, cd, intersections}

%\usepackage[top=1.2in, bottom=1.2in, left=1.2in,
%right=1.2in]{geometry}

\usepackage[all, knot]{xy}
% \xyoption{all}
% \xyoption{arc}
\usepackage{hyperref}
 \usepackage[calc]{picture}
\usepackage{graphicx}

%\newcommand{\edit}[1]{}

%%%
%
% theorem environments
%
%%%

%\numberwithin{equation}{subsection}

\theoremstyle{plain} %% This is the default, anyway

\newtheorem*{introthm*}{Theorem}

\newtheorem{cor}[equation]{Corollary}
\newtheorem{lem}[equation]{Lemma}
\newtheorem{lemma}[equation]{Lemma}
\newtheorem{prop}[equation]{Proposition}
\newtheorem*{prop*}{Proposition}
\newtheorem{theorem}[equation]{Theorem}

\theoremstyle{definition}
\newtheorem{defn}[equation]{Definition}
\newtheorem{chunk}[equation]{}
%\numberwithin{chunk}{subsection}

   %Fontseries used for numbering
\newlength{\thmtopspace}                %Space above theorem
\newlength{\thmbotspace}                %Space below theorem
\newlength{\thmheadspace}               %Space after theorem label
\newlength{\thmindent}                  %For indenting
\setlength{\thmtopspace}%
{0.7\baselineskip plus 0.35\baselineskip minus 0.2\baselineskip}
\setlength{\thmbotspace}%
{0.45\baselineskip plus 0.15\baselineskip minus 0.1\baselineskip} 
\setlength{\thmheadspace}{0.5em}
\setlength{\thmindent}{0pt}     

\renewcommand{\subparagraph}{\vspace*{\thmbotspace}}

\theoremstyle{remark}
\newtheorem{rem}[equation]{Remark}

%% two commands for better bars over symbols
%% new commands are: \u with one argument for underline; \o for overline.
%% From http://tex.stackexchange.com/questions/22100/the-bar-and-overline-commands
\makeatletter
\newsavebox\myboxA
\newsavebox\myboxB
\newlength\mylenA
\newcommand*\xoverline[2][0.75]{%
    \sbox{\myboxA}{$\m@th#2$}%
    \setbox\myboxB\null% Phantom box
    \ht\myboxB=\ht\myboxA%
    \dp\myboxB=\dp\myboxA%
    \wd\myboxB=#1\wd\myboxA% Scale phantom
    \sbox\myboxB{$\m@th\overline{\copy\myboxB}$}%  Overlined phantom
    \setlength\mylenA{\the\wd\myboxA}%   calc width diff
    \addtolength\mylenA{-\the\wd\myboxB}%
    \ifdim\wd\myboxB<\wd\myboxA%
       \rlap{\hskip 0.5\mylenA\usebox\myboxB}{\usebox\myboxA}%
    \else
        \hskip -0.5\mylenA\rlap{\usebox\myboxA}{\hskip 0.5\mylenA\usebox\myboxB}%
    \fi}

\newcommand*\xunderline[2][0.75]{%
    \sbox{\myboxA}{$\m@th#2$}%
    \setbox\myboxB\null% Phantom box
    \ht\myboxB=\ht\myboxA%
    \dp\myboxB=\dp\myboxA%
    \wd\myboxB=#1\wd\myboxA% Scale phantom
    \sbox\myboxB{$\m@th\underline{\copy\myboxB}$}%  Overlined phantom
    \setlength\mylenA{\the\wd\myboxA}%   calc width diff
    \addtolength\mylenA{-\the\wd\myboxB}%
    \ifdim\wd\myboxB<\wd\myboxA%
       \rlap{\hskip 0.5\mylenA\usebox\myboxB}{\usebox\myboxA}%
    \else
        \hskip -0.5\mylenA\rlap{\usebox\myboxA}{\hskip 0.5\mylenA\usebox\myboxB}%
    \fi}
\makeatother

\renewcommand{\o}[1]{\xoverline{#1}}

%%%%

%%% \bigdotc : for center dot that is bigger than \cdot and smaller
%%% than \bullet
\makeatletter
\newcommand*\bigcdot{\mathpalette\bigcdot@{.5}}
\newcommand*\bigcdot@[2]{\mathbin{\vcenter{\hbox{\scalebox{#2}{$\m@th#1\bullet$}}}}}
\makeatother

\def\op{\text{op}}

\newcommand{\cA}{\mathcal{A}}
\newcommand{\cB}{\mathcal{B}}
\newcommand{\cC}{\mathcal{C}}
\newcommand{\cD}{\mathcal{D}}

\newcommand{\cS}{\mathcal{S}}

\newcommand{\cU}{\mathcal{U}}

\newcommand{\bN}{\mathbb{N}}

\newcommand{\bZ}{\mathbb{Z}}

\renewcommand{\t}[1]{\widetilde{#1}}

\newcommand{\la}{\langle}
\newcommand{\ra}{\rangle}

\newcommand{\xra}[1]{\xrightarrow{#1}}

%two commands are the same; so don't' have to remember/change

% should gather module category symbols here

\newcommand{\Hom}[3]{\operatorname{Hom}_{#1}(#2,#3)}

\newcommand{\h}[1]{\mathcal{H}^{#1}}

\numberwithin{equation}{section}

%%% -------------------------------------------------------------------
%%% -------------------------------------------------------------------
%%% -------------------------------------------------------------------
%%% -------------------------------------------------------------------
%%% -------------------------------------------------------------------
%%% -------------------------------------------------------------------
%%% -------------------------------------------------------------------
 
\renewcommand{\cD}{\mathcal{D}}
\newcommand{\Dl}[1][0]{\cD^{\leq #1}} 
\newcommand{\Dg}[1][0]{\cD^{\geq #1}} 
\newcommand{\dtstr}[1][\cD]{(#1^{\leq 0}, #1^{\geq 0})}
\newcommand{\Dcminus}{\cD^{-}_{c}} \newcommand{\Dcb}{\cD^{b}_{c}}

\newcommand{\Coprod}{\operatorname{Coprod}}
\newcommand{\Add}{\operatorname{Add}}
\newcommand{\smd}{\operatorname{smd}}
\newcommand{\D}[1]{\mathsf{D}( #1 )}

\newcommand{\Dp}[1]{\mathsf{D}_{\mathsf{perf}}( #1 )}
\newcommand{\Dqc}[1]{\mathsf{D}_{\mathsf{qc}}( #1 )}
\newcommand{\Dbc}[1]{\mathsf{D}^{\mathsf{b}}_{\mathsf{coh}}( #1 )}
\newcommand{\Dmc}[1]{\mathsf{D}^{-}_{\mathsf{coh}}( #1 )}

\begin{document}
\title{Gluing Approximable Triangulated Categories}
\author{Jesse Burke}
\author{Amnon Neeman}
\author{Bregje Pauwels}
\address{Centre for Mathematics and its Applications, Mathematical Sciences Institute, John
  Dedman Building, The Australian National University, Canberra, ACT 2601, AUSTRALIA}
\email{jesse.burke@anu.edu.au}
\email{amnon.neeman@anu.edu.au}
\email{bregje.pauwels@anu.edu.au}
\begin{abstract}
Given a bounded-above cochain complex of modules over a ring, it is standard to replace it by a projective resolution, and it is classical that doing so can be very useful. 

Recently, a modified version of this was introduced in triangulated categories other than the derived category of a ring. A triangulated category is \emph{approximable} if this modified procedure is possible. Not surprisingly this has proved a powerful tool. For example: the fact that $\Dqc X$ is approximable when X is a quasi compact, separated scheme led to major improvements on old theorems due to Bondal, Van den Bergh and Rouquier. 
In this article we prove that, under weak hypotheses, the recollement of two approximable triangulated categories is approximable. In particular, this shows many of the triangulated categories that arise in noncommutative algebraic geometry are approximable.
\end{abstract}
\maketitle

\setcounter{section}{-1}
\section{Introduction}
Let $\cD$ be a triangulated category  with a t-structure $\dtstr$. One can consider two objects $x,\ y\in\cD$ as ``close together'' if there exists an exact triangle
$x\to y\to z$
in $\cD$ with $z\in\Dl[-n]$ for some large $n$. This intuitive
definition of distance was used in \cite{ApproxAmnon} to define two
new notions, assuming $\cD$ has coproducts and is compactly generated by a single object $G$.
The first is the property of (weak) 
approximability: $\cD$ is \emph{approximable} if every object in $\Dl$ 
can be approximated by objects that are finitely built out of arbitrary coproducts of certain shifts of $G$.
That is, every object in $\Dl$ has a sequence of ``simpler'' objects converging to it.
The second is a pair of subcategories $\Dcb \subseteq \Dcminus
\subseteq \cD$.
Here, $\Dcminus$ denotes the full subcategory with objects that can be approximated by compact objects.

The two notions are connected: if $\cD$ is
approximable and $G$ satisfies certain finiteness conditions, then every finite
cohomological functor, respectively locally finite cohomological
functor, on the compact objects $\cD^{c}$ of $\cD$ is represented by
an object of $\Dcb,$ respectively $\Dcminus$, by \cite[Theorem
0.3]{ApproxAmnon}. In this paper, we prove that approximability and
the two distinguished subcategories may be
glued along a recollement. To be precise, we let $\cD_{F}$ and $\cD_{U}$ be triangulated categories with a single compact generator and let $\cD$ be a compactly generated triangulated category. 
We assume
there is a recollement
\begin{displaymath}
\begin{tikzcd}
\cD_{F} \arrow[r, "i_{*}"] & \cD \arrow[l,shift left = 4.3,
"i^{!}"']  \arrow[l,shift right = 4.3, "i^{*}"'] \arrow[r,
"j^{*}"] & \cD_{U},  \arrow[l,shift left = 4.3, "j_{*}"']
\arrow[l, shift right = 4.3, "j_{!}"']
\end{tikzcd}
\end{displaymath}
which implies that $\cD$ has a compact generator $G$.

\begin{prop*}
  If $\cD_U$ is weakly approximable, $\Hom {\cD}
  {\Sigma^{-n} G} {G} = 0$ for $n \gg 0$,
  and $j^{*} G$ is in
  $(\cD_{U})^{-}_{c},$ then there  are equalities:
  \begin{align*}
  	\Dcminus &= \left \{ X \in \cD \, | \, i^{*} X \in  (\cD_{F})^{-}_{c}
  	\text{ and } j^{*} X \in  (\cD_{U})^{-}_{c} \right \},\\
  	\Dcb \,
  	&= \left \{ X \in \cD \, | \, i^{*} X \in
  	(\cD_{F})^{-}_{c},\  j^{*} X \in (\cD_{U})^{b}_{c} \text{ and }  i^{!} X \in
  	(\cD_{F})^{+}\right \}.
  \end{align*}
\end{prop*}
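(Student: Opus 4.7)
\emph{Setup and strategy.} The plan is to install the Beilinson--Bernstein--Deligne glued t-structure on $\cD$ from the recollement, then prove each equality by establishing both inclusions; the harder direction requires a two-stage compact approximation via the recollement triangles, and this is the main obstacle. The glued t-structure is characterized by $X \in \cD^{\leq 0}$ iff $i^*X \in (\cD_F)^{\leq 0}$ and $j^*X \in (\cD_U)^{\leq 0}$, and $X \in \cD^{\geq 0}$ iff $i^!X \in (\cD_F)^{\geq 0}$ and $j^*X \in (\cD_U)^{\geq 0}$; in particular $j^*$ and $i_*$ are t-exact, $i^*$ and $j_!$ are right t-exact, and $i^!$ and $j_*$ are left t-exact. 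The vanishing $\Hom {\cD}{\Sigma^{-n} G}{G}=0$ for $n \gg 0$ places $G$ in a bounded range of the t-structure, so the approximability machinery of \cite{ApproxAmnon} applies. Because $i_*$ and $j^*$ each possess two adjoints in the recollement, each preserves coproducts, and hence their left adjoints $i^*$ and $j_!$ preserve compact objects. Finally, since every compact of $\cD$ is finitely built from shifts of $G$ and $(\cD_U)^-_c$ is a thick subcategory containing $j^*G$, the hypothesis $j^*G \in (\cD_U)^-_c$ yields $j^*\cD^c \subseteq (\cD_U)^-_c$.

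\emph{Easy direction for $\Dcminus$.} Given $X \in \Dcminus$ and $n \in \bN$, pick a triangle $C \to X \to Y$ with $C \in \cD^c$ and $Y \in \cD^{\leq -n}$. Applying the right t-exact, compact-preserving $i^*$ immediately gives $i^*X \in (\cD_F)^-_c$. Applying $j^*$ yields $j^*C \to j^*X \to j^*Y$ with $j^*Y \in (\cD_U)^{\leq -n}$; although $j^*C$ need not be compact in $\cD_U$, it lies in $(\cD_U)^-_c$, and one further approximation by a true compact modulo $(\cD_U)^{\leq -n}$ together with the octahedral axiom produces the required compact approximation of $j^*X$, establishing $j^*X \in (\cD_U)^-_c$.

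\emph{Hard direction (main obstacle).} Suppose $i^*X \in (\cD_F)^-_c$ and $j^*X \in (\cD_U)^-_c$. I construct a compact approximation of $X$ modulo $\cD^{\leq -n}$ in two stages along the recollement triangle $j_!j^*X \to X \to i_*i^*X$. First, weak approximability of $\cD_U$ produces a compact $B \in (\cD_U)^c$ with $B \to j^*X$ of cone in $(\cD_U)^{\leq -n-k}$ for a buffer $k$ to be fixed below; the adjunction $j_! \dashv j^*$ yields $j_!B \to X$ with $j_!B \in \cD^c$, and setting $X_1$ to be the cone of $j_!B \to X$, the identities $i^*j_! = 0$ and $j^*j_! = \id$ force $i^*X_1 \simeq i^*X$ and $j^*X_1 \in (\cD_U)^{\leq -n-k}$. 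Second, approximate $i^*X_1 \simeq i^*X$ in $\cD_F$ by a compact $A$ with cone in $(\cD_F)^{\leq -n}$. The delicate point is that $i_*$ need not preserve compacts, so one cannot use $i_*A$ directly; the strategy is to lift $A$, viewed as a finite extension of shifts of $i^*G$, to a compact $C' \in \cD^c$ built by the same pattern of extensions of shifts of $G$, with the obstructions controlled by the finiteness hypothesis on $\Hom {\cD}{\Sigma^{-n}G}{G}$. The buffer $k$ is then chosen large enough, relative to the range of $A$, so that the vanishing $\Hom {\cD}{C'}{\Sigma j_!j^*X_1} = 0$ allows lifting $C' \to i_*i^*X_1$ through $X_1 \to i_*i^*X_1$, and so that $j^*C' \to j^*X_1$ does not spoil the bound $(\cD_U)^{\leq -n}$. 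The cone of $C' \to X_1$ then lies in $\cD^{\leq -n}$, and splicing with $j_!B \to X$ gives the desired compact approximation of $X$.

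\emph{Bounded case.} The second equality follows from the first via the characterization of boundedness in the glued t-structure. Concretely, $X \in \cD^{\geq -M}$ iff $i^!X \in (\cD_F)^{\geq -M}$ and $j^*X \in (\cD_U)^{\geq -M}$; for $X \in \Dcb = \Dcminus \cap \cD^b$, upper boundedness is automatic from $X \in \Dcminus$, while lower boundedness combined with the first equality upgrades $j^*X$ from $(\cD_U)^-_c$ to $(\cD_U)^b_c$ and forces $i^!X \in (\cD_F)^+$. Conversely, these conditions plus the first equality yield $X \in \Dcb$.
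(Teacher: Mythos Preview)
Your easy direction and the bounded case match the paper. The hard direction has a genuine gap at the step where you lift the compact $A \in (\cD_F)^c$ to a compact $C' \in \cD^c$ with $i^*C' \cong A$. Such a lift need not exist: the functor $i^*\colon \cD^c \to (\cD_F)^c$ is essentially surjective only up to a $K_0$ obstruction (Thomason's localization theorem; see \cite[Corollary~4.5.14]{MR1812507}), and the vanishing of $\Hom{\cD}{\Sigma^{-n}G}{G}$ for $n\gg0$ has nothing to do with this obstruction. Your suggestion to build $C'$ by mimicking in $\cD$ the pattern of extensions that builds $A$ from shifts of $i^*G$ also fails in general: extension classes in $\cD_F$ need not lift to $\cD$, and $A$ may only be a direct summand of such an iterated extension.

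The paper avoids the problem by reversing your order of operations and invoking Thomason's trick. It first approximates $i^*X$ by a compact $E'\to i^*X$ in $\cD_F$, then replaces $E'$ by $E'\oplus\Sigma^n E'$ for a large odd $n$ chosen so that $\Sigma^n E'\in(\cD_F)^{\leq -m}$; this direct sum vanishes in $K_0(\cD_F)$ and therefore lifts, together with its map to $i^*X$, to a compact $E''\in\cD^c$ with a morphism $E''\to X$ via \cite[Corollary~4.5.14 and Proposition~4.4.1]{MR1812507}. Only then is the $\cD_U$ side treated: the cone $D''$ of $E''\to X$ satisfies $j^*D''\in(\cD_U)^-_c$, one approximates $j^*D''$ by a compact $\t E$ in $\cD_U$, and since $j_!$ already preserves compacts no further lifting trick is needed. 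Two octahedra then splice $E''$ and $j_!\t E$ into a compact $E$ fitting in a triangle $E\to X\to D$ with $D\in\cD^{\leq -m}$. Doing the $\cD_F$ step first is what makes this work: the only object that must be lifted from a quotient category back to $\cD$ is the one coming from $\cD_F$, and the $K_0$ trick handles exactly that.
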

\begin{introthm*}
  If $\cD_{F}$ and $\cD_{U}$ are approximable and 
  $\Hom {\cD}{\Sigma^{-n} G} {G} = 0$ for~$n \gg 0,$ then $\cD$ is approximable.
\end{introthm*}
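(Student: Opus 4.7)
My plan is to glue the approximable t-structures on $\cD_F$ and $\cD_U$ to one on $\cD$, and then construct approximations in $\cD$ by combining approximations from the two sides via the recollement functors and the octahedral axiom.

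\emph{Setup.} Choose compact generators $\tilde G_F, \tilde G_U$ realizing approximability of $\cD_F, \cD_U$ with preferred t-structures and constants $A_F, A_U$. Glue to a t-structure on $\cD$ by
\[
\cD^{\leq 0} := \{X \in \cD : i^*X \in (\cD_F)^{\leq 0}, \ j^*X \in (\cD_U)^{\leq 0}\}.
\]
Standard recollement theory yields $j_!, i^*$ right t-exact and $j^*, i_*$ t-exact. The hypothesis $\Hom{\cD}{\Sigma^{-n}G}{G} = 0$ for $n \gg 0$, combined with $j^*G \in (\cD_U)^-_c$ and the fact that $i^*G$ is a compact generator of $\cD_F$ (compact because $i^*$ preserves compacts as a left adjoint, generating because $i_*$ is fully faithful), yields $G \in \cD^{\leq A}$ for some $A$.

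\emph{Construction.} Given $F \in \cD^{\leq 0}$ and $n \geq 1$, approximate $i^*F$ in $\cD_F$ to obtain $E_F \to i^*F \to D_F$ with $D_F \in (\cD_F)^{\leq -n}$, and form the homotopy pullback $P := F \times_{i_*i^*F} i_*E_F$ along $F \to i_*i^*F$. Then $P \to F$ has cone $i_*D_F \in \cD^{\leq -n}$, and $P$ sits in a triangle $j_!j^*F \to P \to i_*E_F$ with $j^*P \cong j^*F$ and $i^*P \cong E_F$. Next, approximate $j^*F$ in $\cD_U$ at a shifted level $n + M$ (with $M$ to be chosen): $E_U \to j^*F \to D_U$ with $D_U \in (\cD_U)^{\leq -n-M}$, and apply $j_!$ to get $j_!E_U \to j_!j^*F \to j_!D_U$. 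Lift the extension class $[P] \in \operatorname{Hom}_{\cD}(i_*E_F, j_!j^*F[1])$ along $j_!E_U \to j_!j^*F$; this is possible when the obstruction in $\operatorname{Hom}_{\cD}(i_*E_F, j_!D_U[1]) \cong \operatorname{Hom}_{\cD_F}(E_F, i^!j_!D_U[1])$ vanishes, which holds for $M$ sufficiently large by amplitude bounds on the composition $i^!j_!$. The resulting extension $E$ fitting in $j_!E_U \to E \to i_*E_F$ comes equipped with a map $E \to P$ whose cone is $j_!D_U \in \cD^{\leq -n-M}$. Composing, $E \to F$ has cone an extension of $j_!D_U$ by $i_*D_F$, both in $\cD^{\leq -n}$, so the composite cone lies in $\cD^{\leq -n}$.

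\emph{Main obstacle.} The principal work is (i) choosing $M$ so the obstruction vanishes uniformly in $n$, requiring amplitude bounds on $i^!j_!$ that are governed precisely by the hypothesis $\Hom{\cD}{\Sigma^{-n}G}{G} = 0$ for $n \gg 0$, and (ii) verifying that the resulting $E$ is a bounded-step extension of coproducts of shifts of $G$ within a range $[-A-n, A]$ for $A$ independent of $n$. For (ii), $j_!$ preserves compacts (since $j^*$ is a left adjoint and preserves coproducts), so $j_!\tilde G_U$ lies in a bounded-width Thomason filtration of $G$, and hence $j_!E_U$ is correctly bounded-built from $G$. The delicate case is $i_*\tilde G_F$, because $i_*$ need not preserve compacts; taking $\tilde G_F := i^*G$ and applying the recollement triangle $j_!j^*G \to G \to i_*i^*G$ writes $i_*\tilde G_F$ as an extension of $G$ by $j_!j^*G$, with the latter controlled via $j^*G \in (\cD_U)^-_c$ and further approximation. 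Combining yields approximability of $\cD$ with the glued t-structure, generator $G$, and a uniform approximability constant.
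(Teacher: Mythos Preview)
Your strategy has a genuine gap at the obstruction step. You claim the obstruction in $\Hom{\cD_F}{E_F}{i^{!}j_{!}D_U[1]}$ vanishes for $M$ large via ``amplitude bounds on $i^{!}j_{!}$'', but no such bounds exist in general: from the recollement triangle $i_{*}i^{!}j_{!}D_U \to j_{!}D_U \to j_{*}D_U$, bounding $i^{!}j_{!}D_U$ from above would require bounding $j_{*}D_U$ from above, and $j_{*}$ is only \emph{left} t-exact, so $j_{*}D_U$ need not lie in any $\cD^{\leq k}$ even when $D_U\in\cD_U^{\leq -n-M}$. Unwinding through the triangle $j_{!}j^{*}G\to G\to i_{*}i^{*}G$, the obstruction contains the term $\Hom{\cD_U}{j^{*}G}{D_U}$; but $j^{*}$ need not preserve compactness (its right adjoint $j_{*}$ is not assumed to preserve coproducts), so there is no reason for this to vanish as $M\to\infty$. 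You also invoke $j^{*}G\in(\cD_U)^-_c$, which is not among the hypotheses of the theorem, and which in any case would not force $\Hom{\cD_U}{j^{*}G}{\cD_U^{\leq -N}}=0$.

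There is a second gap in step (ii): even granting the obstruction vanishes, the extension $j_{!}E_U\to E\to i_{*}E_F$ does not place $E$ in any $\overline{\langle G\rangle}_B^{[-B,B]}$. The object $i_{*}E_F$ is built, \emph{with arbitrary coproducts}, from $i_{*}i^{*}G$, and the triangle $j_{!}j^{*}G\to G\to i_{*}i^{*}G$ expresses $i_{*}i^{*}G$ only in terms of $G$ and $j_{!}j^{*}G$; since $j^{*}G$ is merely in $\cD_U^{-}$ and not bounded-built from $G_U$, the object $j_{!}j^{*}G$ is not in any $\overline{\langle G\rangle}_B^{[-B,B]}$. ``Further approximation'' of $j_{!}j^{*}G$ leaves a tail in $\cD^{\leq -m}$, and because $E_F$ involves infinite coproducts of $i^{*}G$, these tails do not assemble into a controllable error term. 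The paper avoids both problems by never forming $i_{*}E_F$: instead it lifts the \emph{morphism} $E'\to i^{*}X$ directly to a morphism $E\to X$ in $\cD$ with $E\in\overline{\langle G\rangle}_B^{[-B,B]}$, using that $i^{*}$ realizes $\cD_F$ as a Verdier quotient of $\cD$, so that any such morphism is represented by a roof $\Sigma^k G\leftarrow Y\to X$ in $\cD$ with $i^{*}$ of the left leg an isomorphism; approximability of $\cD_U$ is then used to replace $Y$ by an object bounded-built from $G$.
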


Approximability is a very useful and natural notion in the study of the triangulated categories of algebraic geometry. If $\cD = \Dqc X$ is the derived
category of quasi-coherent sheaves on a scheme $X,$ then $\cD$ is
approximable if $X$ is quasi-compact and separated \cite[Example 3.6]{ApproxAmnon},
and if we further assume $X$ Noetherian, the subcategories $\Dcb$ and
$\Dcminus$ are $\Dbc{X}$ and $\Dmc{X}$, respectively.
The approximability property of the category $\Dqc X$ is used in
\cite{AmnonStrong} and \cite{ApproxAmnon} 
to significantly generalize foundational results of Bondal and
Van den Bergh \cite{MR1996800}, and Rouquier \cite{MR2434186}; in
particular to mixed characteristic.
%%%%%%%%%%%%%%%%%%%%%%%%%%%%%%

We expect that approximability will also be useful in noncommutative geometry. We mean noncommutative algebraic geometry in the sense
of \cite{MR3545926}, where a \emph{noncommutative scheme} is a small pretriangulated dg-category with a
classical generator, and the derived category of the scheme is the
derived category of the dg-category. 
In \cite[Theorem 3.1.1(ii)]{MR1996800} it was proved that if $X$ is a quasi-compact quasi-separated scheme, then $\Dp{X}$ has a classical generator. In this way, the classical commutative schemes are considered within the world of noncommutative schemes.
%and Keller's resu \cite{MR1258406}, for any separated scheme $X$ of finite type over a field $k$, there is a dg $k$-algebra $A$ such that $\Dqc X\simeq \D{A}$. This equivalence moreover identifies the subcategories of perfect complexes $\Dp{X}\simeq \Dp{A}$. The dg-enhancement $\perf A$ is then  the noncommutative scheme associated to $X$.

A central construction in noncommutative algebraic geometry is the 
gluing $\cA$ of two small dg-categories $\cB, \cC$ along a $\cC$-$\cB$-bimodule (\cite {MR3545926, MR2451292}).
In the final section of this paper, we show that if
$\cB$ and $\cC$ have approximable derived categories, and the bimodule is cohomologically 
bounded above, then the glued dg-category $\cA$ also has an approximable derived
category. Furthermore, we describe how $\D{\cA}_c^{-}$ is built from gluing $\D{\cB}_c^{-}$ and $\D{\cC}_c^{-}$.
This result significantly expands the known
examples of approximable triangulated categories in noncommutative
algebraic geometry.  Even if $\D{\cB}$ and $\D{\cC}$ are ``commutative'',
i.e., equivalent to the derived categories of quasi-coherent sheaves
on a commutative scheme, the glued category $\D{\cA}$ is rarely
commutative.

Many interesting classes of noncommutative schemes are
constructed via gluing. In \cite{MR3439086}, Kuznetsov and Lunts construct categorical resolutions of separated schemes of finite type over a field $k$ of characteristic zero by gluing derived categories of smooth varieties. This notion of categorical resolutions was defined in \cite{MR2609187} as a generalization of Van Den Bergh's notion of noncommutative crepant resolution \cite{MR2077594}. Gluing of DG-categories and categorical resolutions are also used in~\cite{efimov} to show that $\Dbc{X}$ is homotopically finitely presented whenever $X$ is a separated scheme of finite type over a field $k$ of characteristic zero. 
For other places where gluing is used in noncommutative geometry, see \cite{MR3535370, MR3728631, MR2981713, efimov1}.

%We say that $\cK$ is regular if it has a strong generator. A $k$-linear trianggulated category $\cK$ is proper if $\oplus_{n\in\bZ} \Hom{\cK}{X}{Y[n]}$ is finite dimensional for any two objects $X,Y\in \cK$. 
%For separated schemes of finite type over a field $k$, properness of $X$ is equivalent to properness of the category of perfect complexes \cite[Proposition 3.30]{MR3545926}. A seperated noetherian scheme $X$ is regular if and only if the category $\perf{A}# is regular \cite[Theorem 3.27]{MR3545926}.
%The properties of regularity and properness behave well under gluing. That is, the gluing is regular if and only if A and B are regular. The gluing is regular and proper if and only if A and B are regular and proper, and the bimodule is perfect \cite[Proposition 3.20 and Proposition 3.22]{MR3545926} 

\section{Notation and definitions}
Throughout, $\cD$ denotes a triangulated category with coproducts.
We recall some standard notation, see~\cite[Definition 0.21]{ApproxAmnon}.
\begin{chunk}
  Let $\cA,\ \cB$ be full subcategories of $\cD$, and define the
  following full subcategories:
  \begin{enumerate}[label=\alph*)]
    \item
    $\cA\ast\cB$ has for objects all the $x\in\cD$ such that there exists an exact triangle $a\to x\to b$ in $\cD$ with $a\in\cA$ and $b\in\cB$.
    \item $\Add(\cA)$ has for objects all coproducts of objects of $\cA$.
    \item $\smd(\cA)$ has for objects all direct summands of objects of $\cA$.
  \end{enumerate}
\end{chunk}

\begin{chunk}
  Let $G$ be an object in $\cD$ and suppose $n\in\bN$ and
  $A\leq B\in\bZ\cup\{-\infty,+\infty\}$.  We define the following
  full subcategories:
  \begin{enumerate}[label=\alph*)]
    \item If $A,B\in\bZ$, then $G[A,B]\subset\cD$ has objects $\{\Sigma^{-i} G\mid A\leq i\leq B\}$. \\
    Similarly,
    $G(-\infty,B]\subset\cD$ has objects $\{\Sigma^{-i} G\mid  i\leq B\}$.
    We define $G[A,\infty)$ and $G(-\infty,\infty)$ analogously.
    \item $\Coprod_n(G[A,B])$ is defined inductively on the integer $n$ by setting 
	$$\Coprod_1(G[A,B])=\Add(G[A,B])\ \ \text{and}$$ 
	$$\Coprod_{n+1}(G[A,B])=\Coprod_1(G[A,B])\ast\Coprod_{n}(G[A,B]).$$
    \item $\Coprod(G[A,B])$ is the smallest full subcategory $\cS\subset \cD$, closed under coproducts, with $\cS\ast\cS\subset\cS$ and with $G[A,B]\subset\cS$.
    \item $\o{\la G \ra}_n^{[A,B]}=\smd(\Coprod_{n}(G[A,B]))$.
    
    \item $\o{\la G \ra}^{[A,B]}=\smd(\Coprod(G[A,B]))$.
  \end{enumerate}
\end{chunk}

\begin{chunk} \cite[Definition 0.21]{ApproxAmnon}\label{def:wa} A
  triangulated category $\cD$ with coproducts is \emph{weakly approximable} if there
  exists a compact generator $G,$ a t-structure $\dtstr,$ and an
  integer $A > 0,$ such that the following hold:
  \begin{enumerate}
    \item $\Sigma^{A} G \in \Dl$ and $\Hom {\cD} {\Sigma^{-A} G} \Dl = 0.$
  
    \item For every $X \in \Dl,$ there exists an exact triangle $E \to X \to D$ with $E \in \o{\la G \ra}^{[-A,A]}$ and $D \in \Dl[-1].$
  \end{enumerate}
  If properties $(1)$ and $(2)$ hold for a compact generator $G$ and
  integer $A,$ then for every compact generator $H$ there is an
  integer $A_{H}$
  for which they hold, by~\cite[Proposition
  2.6]{ApproxAmnon}.
  
  The category $\cD$ is \emph{approximable} if we can choose $A > 0$ such that, moreover,
  \begin{enumerate}\setcounter{enumi}{2}
  	\item in the exact triangle $E \to X \to D$ above, we may assume $E \in \o{\la G \ra}^{[-A,A]}_{A}.$
  \end{enumerate}
   If properties $(1)$, $(2)$ and $(3)$ hold for a compact generator $G$ and
  integer $A,$ then for every compact generator $H$ there is an
  integer $A_{H}$
  for which they hold, by~\cite[Proposition
  2.6]{ApproxAmnon}.
\end{chunk}

\begin{chunk}\label{def:rec} A
  \emph{recollement} is a diagram of exact functors between
  triangulated categories,
  \begin{displaymath}
    \begin{tikzcd}
      \cD_{F} \arrow[r, "i_{*}"] %, inner sep = 1ex
      & \cD \arrow[l,shift left = 4.3, "i^{!}"']  \arrow[l,shift right
      = 4.3, "i^{*}"'] \arrow[r, "j^{*}"] & \cD_{U}, \arrow[l,shift
      left = 4.3, "j_{*}"'] \arrow[l, shift right = 4.3, "j_{!}"']
    \end{tikzcd}
  \end{displaymath}
  with the following properties:
  \begin{enumerate}
    \item each functor is left adjoint to the one below it;
    \item $j^{*} i_{*} = 0$;
          
    \item there exist natural transformations $d: i_{*}i^{*} X \to j_{!}
    j^{*}[1]$ and $d': j_{*}j^{*} \to i_{*}i^{!}[1]$ such that
    for any $X \in \cD,$ the following are exact
    triangles:
    \[j_{!}j^{*}X\xra{\eta} X\xra{\epsilon} i_{*}i^{*} X \xra{d} j_{!}j^{*}X[1]\]
    \[i_{*}i^{!}X\xra{\eta} X\xra{\epsilon} j_{*}j^{*} X
    \xra{d'} i_{*}i^{!}X[1],\]
    where $\eta$ and $\epsilon$ are the (co)unit maps of the adjunctions;
          % $$j_{!}j^{*}X\xra{\eta} X\xra{\epsilon} i_{*}i^{*} X \xra{d} j_{!}j^{*}X[1]
          % \quad\mbox{and}\quad i_{*}i^{!}X\to X\to j_{*}j^{*} X
          % \xra{d'} i_{*}i^{!}X[1],$$
          
    \item $i_{*}, j_{!}, j_{*}$ are fully faithful.
  \end{enumerate} 
  We note that $i^*,\ i_{*},\ j_{!},\ j^*$ preserve coproducts, since they are left adjoints. Since $i^{*}$ and $j_{!}$ have right adjoints preserving coproducts,
  \cite[Theorem~5.1]{Ne96} informs us that  $i^{*}$ and $j_{!}$
  respect compact objects.
\end{chunk}
        
\begin{chunk}
  (\cite[1.4.10]{MR751966})\label{def:gluing} Consider a recollement
  as above.  Given t-structures $\dtstr [\cD_{F}]$ and
  $\dtstr [\cD_{U}]$ on $\cD_{F}$ and $\cD_{U}$ respectively, the
  \emph{glued t-structure on $\cD$} has aisle
	$$\Dl:=\{X\in\cD\mid i^{*}X\in \Dl_F \mbox{ and } j^{*}X\in\Dl_U \}$$
	and co-aisle
	$$\Dg:=\{X\in\cD\mid i^{!}X\in \Dg_F \mbox{ and } j^{*}X\in\Dg_U \}.$$
\end{chunk}

\begin{chunk}
	(\cite[1.3.16]{MR751966})\label{def:texact}
	Let $\cD_1$, $\cD_2$ be triangulated categories endowed with t-structures.
	% $\dtstr [\cD_{1}]$ and $\dtstr [\cD_{2}]$ respectively. 
	A functor $f:\cD_1\to\cD_2$ is \emph{right t-exact} if $f(\Dl_1)\subset \Dl_2$, and \emph{left t-exact} if $f(\Dg_1)\subset \Dg_2$. We say $f$ is \emph{t-exact} if $f$ is left and right t-exact.

	%Let $(f^*,f_*)$ be an adjoint pair of functors. By~\cite[1.3.17(iii)]{MR751966}, $f^*$ is right t-exact if and only if $f_*$ is left t-exact. 
	Consider a recollement as in~\ref{def:rec} and suppose $\cD$ has a t-structure glued from t-structures on $\cD_{F}$ and $\cD_{U}$.
	%are endowed with t-structures, and we consider the glued t-structure on $\cD$. 
	%Given t-structures $\dtstr [\cD_{F}]$ and $\dtstr [\cD_{U}]$ on $\cD_{F}$ and $\cD_{U}$ respectively, and 
	By~\cite[1.3.17(iii)]{MR751966}, the functors $i^*,\ j_{!}$ are right t-exact, $i_*,\ j^*$ are t-exact and $i^!,\ j_*$ are left t-exact. 
\end{chunk}

      \section{$t$-structure generated by $G$}
      
\begin{defn} \cite[Definition 0.10]{ApproxAmnon}\label{def:equiv}
	Let $\cD$ be a triangulated category. Two t-structures $\dtstr[\cD_1]$ and $\dtstr[\cD_2]$ on $\cD$ are \emph{equivalent} if there exists an integer $A > 0$ with $\Dl[-A]_1\subset\Dl_2\subset\Dl[A]_1$.
\end{defn}

\begin{lem}\label{lem:stablegl}
	The gluing of t-structures along a recollement is stable under equivalence. That is, consider a recollement as in~\ref{def:rec} and suppose $\dtstr [\cD_{F}]$, $\dtstr [{\cD'}_{F}]$ are equivalent t-structures on $\cD_F$ and $\dtstr [\cD_{U}]$, $\dtstr [{\cD'}_{U}]$ are equivalent t-structures on $\cD_U$.
	Then the t-structure on $\cD$ obtained by gluing $\dtstr [\cD_{F}]$, $\dtstr [\cD_{U}]$ is equivalent to the t-structure obtained by gluing $\dtstr [{\cD'}_{F}]$, $\dtstr [{\cD'}_{U}]$.
\end{lem}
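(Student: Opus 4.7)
The plan is to verify both inclusions required by Definition~\ref{def:equiv} directly from the gluing formula of Chunk~\ref{def:gluing}, using only that the recollement functors $i^*$ and $j^*$ are exact and hence commute with the suspension.

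First I would choose a single integer $A>0$ witnessing both equivalences. Let $A_F$ and $A_U$ be the constants provided by Definition~\ref{def:equiv} for the pair $\dtstr[\cD_F], \dtstr[{\cD'}_F]$ and the pair $\dtstr[\cD_U], \dtstr[{\cD'}_U]$ respectively, and set $A:=\max(A_F,A_U)$. Then simultaneously
\[\cD_F^{\leq -A}\subset{\cD'}_F^{\leq 0}\subset\cD_F^{\leq A}\quad\text{and}\quad\cD_U^{\leq -A}\subset{\cD'}_U^{\leq 0}\subset\cD_U^{\leq A}.\]
Write $\cD^{\leq n}$ for the aisle of the glued t-structure built from $\dtstr[\cD_F],\dtstr[\cD_U]$, and $\cD'^{\leq n}$ for the one built from $\dtstr[{\cD'}_F],\dtstr[{\cD'}_U]$.

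Next, since $i^*$ and $j^*$ are exact they commute with shifts, so the formula in Chunk~\ref{def:gluing} gives for every $n\in\bZ$ the description
\[\cD^{\leq n}=\{X\in\cD\mid i^*X\in\cD_F^{\leq n}\text{ and }j^*X\in\cD_U^{\leq n}\},\]
and similarly for $\cD'^{\leq n}$. The two inclusions required for equivalence are then immediate: if $X\in\cD^{\leq -A}$, then $i^*X\in\cD_F^{\leq -A}\subset{\cD'}_F^{\leq 0}$ and $j^*X\in\cD_U^{\leq -A}\subset{\cD'}_U^{\leq 0}$, so $X\in\cD'^{\leq 0}$; the reverse inclusion $\cD'^{\leq 0}\subset\cD^{\leq A}$ follows symmetrically from the other halves of the two equivalences. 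Together these give $\cD^{\leq -A}\subset\cD'^{\leq 0}\subset\cD^{\leq A}$, which is precisely the equivalence asserted by the lemma.

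There is no substantial obstacle here: the proof is essentially a bookkeeping exercise once the gluing aisle is rewritten in shifted form. The only point worth checking carefully is that the aisle of the glued t-structure is compatible with the suspension in the evident componentwise way, which is an immediate consequence of the exactness of $i^*$ and $j^*$ recorded in Chunk~\ref{def:texact}.
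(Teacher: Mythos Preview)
Your proposal is correct and follows exactly the same line as the paper's proof: pick a single $A>0$ witnessing both equivalences, then read off $\cD^{\leq -A}\subset{\cD'}^{\leq 0}\subset\cD^{\leq A}$ directly from the gluing formula for the aisle. The paper merely states this implication in one sentence, while you spell out the choice $A=\max(A_F,A_U)$ and the role of exactness of $i^*,j^*$, but there is no substantive difference.
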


\begin{proof}
	Write $\dtstr$, $\dtstr [{\cD'}]$ for the t-structures on $\cD$ obtained by gluing $\dtstr [\cD_{F}]$, $\dtstr [\cD_{U}]$ and $\dtstr [{\cD'}_{F}]$, $\dtstr [{\cD'}_{U}]$ respectively.
	If $A>0$ is an integer such that $\Dl[-A]_{F}\subset{\cD'}_{F}^{\leq 0}\subset \Dl[A]_{F}$ and $\Dl[-A]_U\subset{\cD'}^{\leq 0}_U\subset \Dl[A]_U$, then
	$\Dl[-A]\subset{\cD'}^{\leq 0}\subset \Dl[A]$.
\end{proof}

      \begin{defn}\cite[Theorem A.1]{MR1974001}
        Given a compact object $G$ of $\cD$, the t-structure
        \emph{generated by $G$}, denoted $(\Dl_{G}, \Dg_{G})$, has
        aisle
        $\Dl_{G} =\o {\la G \ra}^{(-\infty, 0]}.$
      \end{defn}
  
 \begin{rem}
 		Since the category $\Coprod(G(-\infty,0])$ is closed under positive suspensions and coproducts, it contains all direct summands of its objects. This shows  $\Dl_{G} =\o {\la G \ra}^{(-\infty, 0]}=\Coprod(G(-\infty,0]))$.
 \end{rem}

Suppose now that $\cD$ is compactly generated by the compact object $G$. If $H$ is also a compact generator for $\cD$, \cite[Lemma 0.9]{ApproxAmnon} provides a positive integer $A$ with $H\in \o{\la G \ra}_A^{[-A,A]}$ and
$G\in \o{\la H \ra}_A^{[-A,A]}$. This shows
%$\o {\la H \ra}^{(-\infty, -A]}\subset \o {\la G \ra}^{(-\infty, 0]}\subset \o {\la H \ra}^{(-\infty, A]}$, that is 
$\Dl[-A]_H\subset \Dl_G\subset \Dl[A]_H$,
hence the t-structures generated by $G$ and $H$ are equivalent. Thus
the next definition does not depend on the choice of compact generator:

\begin{defn}
	If $\cD$ is compactly generated by the compact object $G$, then the \emph{preferred equivalence class of t-structures} is the equivalence class containing the t-structure $\dtstr[\cD_G]$ generated by $G$.
\end{defn}

The first part of the following lemma follows from~\cite[Remark 0.20]{ApproxAmnon}, and the second part follows from~\cite[Observation 0.12(ii) and Lemma 2.8]{ApproxAmnon}:

\begin{lemma}\label{lem:gen}
	Suppose $\cD$ has a compact generator $G$ with
	$\Hom {\cD}{\Sigma^{-n} G} {G} = 0$ for~$n \gg 0$.
        Assume $\dtstr[\cD]$ is a t-structure in the preferred
        equivalence class. Then
	\begin{enumerate}
		\item there exists an integer $C > 0$ such that $\Hom {\cD} G {\Dl [-C]} = 0$;
		\item for all compact objects $F,H\in \cD,$ %, there exists an integer $C' > 0$ such that 
		we have $\Hom {\cD}{\Sigma^{-m} F} {H} = 0$ for $m \gg 0$.
	\end{enumerate}
	In particular, the vanishing hypothesis on $G$ is independent of the
	choice of compact generator.
\end{lemma}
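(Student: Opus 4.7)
For part (1), the plan is to first reduce to the case where the t-structure is $\dtstr[\cD_G]$ itself, the t-structure generated by $G$. If $\dtstr$ is equivalent to $\dtstr[\cD_G]$ with constant $A > 0$, then $\Dl[-C] \subseteq \Dl[-(C-A)]_G$, so finding $C_0$ that works for the generated t-structure yields $C = C_0 + A$ for $\dtstr$. By the remark following the definition of the generated t-structure, $\Dl[-C]_G = \Coprod(G(-\infty,-C])$ is the smallest subcategory of $\cD$ closed under coproducts and extensions containing $\{\Sigma^k G : k \geq C\}$. Setting $\cS := \{X \in \cD : \Hom {\cD} G X = 0\}$, the compactness of $G$ makes $\cS$ closed under coproducts, while the long exact sequence of $\Hom {\cD} G {-}$ makes $\cS$ closed under extensions and direct summands. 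Rewriting the hypothesis as $\Hom {\cD} G {\Sigma^n G} = 0$ for $n \geq N$ shows $G(-\infty,-N] \subseteq \cS$, hence $\Dl[-N]_G \subseteq \cS$, completing part (1) with $C = N + A$; compare~\cite[Remark 0.20]{ApproxAmnon}.

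For part (2), the plan is to exploit that every compact object of $\cD$ lies in $\o{\la G \ra}_n^{[-A,A]}$ for some $A > 0$ and $n \geq 1$, essentially the content of~\cite[Observation 0.12(ii)]{ApproxAmnon}. I choose a common $A$ and $n$ so that both $F$ and $H$ belong to $\o{\la G \ra}_n^{[-A,A]}$. The base case is the computation
\[
\Hom {\cD} {\Sigma^{-m}\Sigma^{-i} G} {\Sigma^{-j} G} = \Hom {\cD} {\Sigma^{-(m+i-j)} G} {G}
\]
for $i,j \in [-A,A]$, which vanishes whenever $m > N + 2A$ by the hypothesis on $G$. I then propagate this vanishing through the $n$-step $\Coprod$-filtrations of $F$ and $H$ by applying closure of the vanishing locus $\{(X,Y) : \Hom {\cD} {\Sigma^{-m} X} Y = 0\}$ under coproducts (in either variable, using compactness of shifts of $G$ in the first argument), extensions (via long exact sequences), and direct summands; this is the bookkeeping mechanism captured by~\cite[Lemma 2.8]{ApproxAmnon}. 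The ``in particular'' statement is then immediate: applying (2) to $F = H = G'$ for any other compact generator $G'$ yields $\Hom {\cD}{\Sigma^{-m} G'}{G'} = 0$ for $m \gg 0$, which is precisely the vanishing hypothesis for $G'$.

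The main obstacle I expect is organizational rather than conceptual: in part (2) one must carefully track how the shift window $[-A,A]$ and the complexity level $n$ of $F$ and $H$ combine under the $\Coprod$ construction in order to produce a single uniform lower bound on $m$. This comes down to a routine but somewhat tedious double induction on the $\Coprod$-levels of $F$ and $H$, but is unavoidable.
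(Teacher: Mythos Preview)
Your proposal is correct and matches the paper's approach exactly: the paper does not give a proof but simply cites \cite[Remark~0.20]{ApproxAmnon} for part~(1) and \cite[Observation~0.12(ii) and Lemma~2.8]{ApproxAmnon} for part~(2), precisely the references you invoke and whose content you have accurately unpacked. One small efficiency you might note: once part~(1) is in hand, part~(2) follows without the double induction, since any compact $H$ lies in some $\Dl[A]$ and then $\Hom{\cD}{\Sigma^{-m}G}{H}=\Hom{\cD}{G}{\Sigma^m H}=0$ for $m\geq A+C$, after which a single induction on the $\Coprod$-level of $F$ suffices.
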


Recall that a t-structure $\dtstr$ on $\cD$ is
      \emph{non-degenerate} if
      \[\bigcap_{n\in\bZ} \Dl[n]=0 \quad \text{and} \quad \bigcap_{n\in\bZ}\Dg[n]=0.\]

\begin{lemma}\label{lem:nondegnG}
  Suppose $\cD$ has a compact generator $G$ with
  $\Hom {\cD}{\Sigma^{-n} G} {G} = 0$ for~$n \gg 0$. Then every t-structure in the preferred equivalence class
  is non-degenerate.
\end{lemma}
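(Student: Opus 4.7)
The plan is to verify both halves of non-degeneracy, $\bigcap_n \Dl[n] = 0$ and $\bigcap_n \Dg[n] = 0$, directly for a chosen t-structure $\dtstr$ in the preferred equivalence class. First I observe that non-degeneracy is itself stable under equivalence: if $\Dl[-A]_1 \subset \Dl_2 \subset \Dl[A]_1$, then shifting yields $\Dl[n-A]_1 \subset \Dl[n]_2 \subset \Dl[n+A]_1$, together with the analogous inclusions for co-aisles, and the two telescoping intersections are therefore cofinal and coincide. It thus suffices to work with one convenient representative, and I will freely use that $G \in \Dl[A]$ for some $A \geq 0$, a consequence of $G \in \Dl_G$ in the generated t-structure $\dtstr[\cD_G]$.

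For the aisle intersection, the plan is to invoke Lemma~\ref{lem:gen}(1) to obtain a constant $C > 0$ with $\Hom{\cD}{G}{\Dl[-C]} = 0$. If $X \in \bigcap_n \Dl[n]$, then this intersection is stable under $\Sigma$, so every shift $\Sigma^m X$ again lies in it and in particular in $\Dl[-C]$. Consequently
\[
\Hom{\cD}{\Sigma^{-m} G}{X} \cong \Hom{\cD}{G}{\Sigma^m X} = 0 \qquad \text{for all } m \in \bZ,
\]
and since $G$ is a compact generator, the family $\{\Sigma^{-m} G\}_{m \in \bZ}$ detects the zero object, so $X = 0$. For the co-aisle intersection the argument is purely t-structural and does not use the vanishing hypothesis: from $G \in \Dl[A]$ we get $\Sigma^m G \in \Dl[A - m]$, so if $X \in \bigcap_n \Dg[n]$ then in particular $X \in \Dg[A - m + 1]$, and t-structure orthogonality gives $\Hom{\cD}{\Sigma^m G}{X} = 0$. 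As this holds for every $m \in \bZ$, again $X = 0$.

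The only substantive input is Lemma~\ref{lem:gen}(1), which is also the sole place where the hypothesis $\Hom{\cD}{\Sigma^{-n}G}{G} = 0$ for $n \gg 0$ enters; the co-aisle half depends only on $G$ being bounded above in the t-structure, which is automatic for any representative of the preferred class. The main obstacle I anticipate is purely notational, namely ensuring that the shifts interact correctly with the conventions $\Dl[n] = \cD^{\leq n}$ and $\Dg[n] = \cD^{\geq n}$; no additional nontrivial ingredient appears to be required.
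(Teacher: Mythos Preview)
Your proof is correct and uses essentially the same ingredients as the paper: stability of non-degeneracy under equivalence, Lemma~\ref{lem:gen}(1), and the fact that $\{\Sigma^m G\}$ detects zero. The only difference is organizational---the paper argues the contrapositive in one stroke (start with nonzero $X$, pick a nonzero map $\Sigma^l G\to X$, and use it to exclude $X$ from both a deep aisle and a deep co-aisle), whereas you treat the two intersections separately; your observation that the co-aisle half needs only $G\in\Dl[A]$ and not the vanishing hypothesis is a nice bonus.
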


\begin{proof}
Being non-degenerate is clearly stable under equivalence, so it suffices to show the t-structure $(\Dl_{G}, \Dg_{G})$ generated by $G$ is non-degenerate.
  Let $X$ be a nonzero object in $\cD$. Since $G$ is a compact
  generator for $\cD$, there exists a nonzero morphism
  $\Sigma^l G\to X$ for some $l\in\bZ$. It follows that
  $X\notin \Dg[-l+1]_G=(\Dl[-l]_G)^{\perp}$.  Moreover, by the first
  part of~Lemma~\ref{lem:gen} there exists $C>0$ such that
  $\Hom {\cD} {G} {\Dl[-C]_G} = 0$. Hence
  $\Hom {\cD} {\Sigma^{l} G} {\Dl[-C-l]_G} = 0$ and we can conclude
  that $X\notin \Dl[-C-l]_G$.
\end{proof}

The following lemma is exactly \cite[Proposition 2.6]{ApproxAmnon}.  It tells us that once we know a category to be (weakly) approximable, any compact generator and any t-structure in the preferred equivalence class will fulfill the approximability criteria in~Definition~\ref{def:wa}.

\begin{lem}\label{rem:wa}
  Let $\cD$ be weakly approximable, suppose $G$ is a
  compact generator for $\cD$ and that $\dtstr$ is any t-structure in the preferred equivalence class.  Then there exists $A>0$ such that conditions (1) and (2) in~Definition~\ref{def:wa} hold for
  the compact generator $G$ and the t-structure $\dtstr$.
  If $\cD$ is moreover approximable, $A$ can be chosen to further satisfy condition (3) in~Definition~\ref{def:wa}.
\end{lem}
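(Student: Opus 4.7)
The plan is to deform a triple $(G_0, \dtstr[\cD_0], A_0)$ witnessing weak approximability by independently swapping the t-structure and the compact generator, with the cost of each swap absorbed into an enlarged constant. The key preliminary, which I expect to be the main obstacle, is the claim that $\dtstr[\cD_0]$ itself lies in the preferred equivalence class. Condition~(1) gives $G_0 \in \Dl[-A_0]_0$, which together with closure of the aisle under $\Sigma$ forces $\Dl_{G_0} \subset \Dl_0$; the reverse sandwiching $\Dl_0 \subset \Dl[B]_{G_0}$ is the harder direction, where I would use condition~(2) to approximate an arbitrary $X \in \Dl_0$ by some $E \in \o{\la G_0 \ra}^{[-A_0, A_0]}$ whose cofibre lies deeper in $\Dl_0$, and combine this with the orthogonality half of~(1) to bound how far $X$ can sit from $\Dl_{G_0}$.

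Once $\dtstr[\cD_0]$ and $\dtstr$ are both known to lie in the preferred class, pick $B > 0$ with $\Dl[-B]_0 \subset \Dl \subset \Dl[B]_0$. Condition~(1) for $(G_0, \dtstr)$ then follows with constant $A_0 + B$: the aisle containment is inherited upward and the orthogonality only weakens by a shift of $B$. For~(2), given $X \in \Dl \subset \Dl[B]_0$, apply~(2) to $\Sigma^{-B} X \in \Dl_0$ to obtain a triangle $E_0 \to \Sigma^{-B}X \to D_0$ with $E_0 \in \o{\la G_0 \ra}^{[-A_0, A_0]}$ and $D_0 \in \Dl[-1]_0$; shifting by $\Sigma^B$ produces $\Sigma^B E_0 \to X \to \Sigma^B D_0$ with $\Sigma^B E_0 \in \o{\la G_0 \ra}^{[-A_0-B, A_0+B]}$ and $\Sigma^B D_0 \in \Dl[-B-1]_0 \subset \Dl[-1]$.

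Next, swap the compact generator. By \cite[Lemma 0.9]{ApproxAmnon} there is $M > 0$ with $G \in \o{\la G_0 \ra}_M^{[-M, M]}$ and $G_0 \in \o{\la G \ra}_M^{[-M, M]}$. For~(1) with $G$, the first containment realizes a sufficiently positive suspension of $G$ as a finite extension of shifts of $G_0$ lying in $\Dl$; the second converts the orthogonality statement from $G_0$ to $G$ at the cost of a shift by $M$. For~(2), apply the version of~(2) for $(G_0, \dtstr)$ established above to any $X \in \Dl$, then expand each $\Sigma^i G_0$ in the resulting $E_0$ using $G_0 \in \o{\la G \ra}_M^{[-M, M]}$ to see $E_0 \in \o{\la G \ra}^{[-A_0-B-M, A_0+B+M]}$. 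Setting $A := A_0+B+M$ verifies conditions~(1) and~(2) for $(G, \dtstr)$.

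For the approximability statement, the same argument handles~(1) and~(2). Condition~(3) additionally constrains the number of extension steps; the only new input is the inclusion $\Coprod_a \ast \Coprod_b \subset \Coprod_{a+b}$, which says that an $A_0$-step extension whose building blocks are themselves $M$-step extensions is an $A_0 M$-step extension. Hence~(3) transfers with a suitably enlarged constant, and the rest of the argument carries over.
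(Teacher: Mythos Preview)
The paper does not prove this lemma at all: it is stated as a direct citation of \cite[Proposition~2.6]{ApproxAmnon}, so there is no in-paper argument to compare against. Your outline is the right one and matches what that cited proposition does---first show the witnessing t-structure $\dtstr[\cD_0]$ lies in the preferred equivalence class, then transfer conditions~(1)--(3) across a change of t-structure and a change of compact generator, absorbing each swap into an enlarged constant.

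Two points need tightening. First, in your ``main obstacle'' the inclusion $\Dl_0 \subset \Dl[B]_{G_0}$ genuinely requires a homotopy colimit: iterating~(2) produces a tower $E_1 \to E_2 \to \cdots \to X$ with each $E_m \in \o{\la G_0 \ra}^{(-\infty,A_0]} \subset \Dl[A_0]_{G_0}$ and cofibres $D_m \in \Dl[-m]_0$; the orthogonality half of~(1) then forces $\Hom{\cD}{C}{D_m}=0$ for every compact $C$ and $m\gg0$, so $X\cong\text{hocolim}\,E_m$ lies in the (coproduct- and extension-closed) aisle $\Dl[A_0]_{G_0}$. Your sketch points at the right ingredients but stops short of this step. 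Second, in your t-structure swap there is a sign slip: from $X\in\Dl\subset\Dl[B]_0$ one gets $\Sigma^{B}X\in\Dl_0$, not $\Sigma^{-B}X$. After correcting this, a single application of~(2) leaves the cofibre only in $\Dl[B-1]_0$, which need not lie in $\Dl[-1]$; you must iterate~(2) roughly $2B+1$ times (as in \cite[2.2.1]{ApproxAmnon}) before shifting back. The generator swap and the bookkeeping for condition~(3) are fine.
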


%In particular, taking $G = H$ in the above shows that if \ref{def:wa}
%(1) and (2) hold for some t-structure, then they hold for the
%t-structure generated by $G.$

\begin{lemma}\label{lem:jG}
  Let $\cD$ be a weakly approximable category, let $G$ be a 
  compact generator, and let $\dtstr[\cD]$ be a t-structure in the preferred
  equivalence class. There exists an integer $A>0$ such that,
  if $\Hom {\cD}{\Sigma^{-n}G} {X} = 0$ for all $n > 0$, then $X\in\Dl[A]$.
  Consequently if $\Hom {\cD}{\Sigma^{-n}G} {X} = 0$ for $n\gg 0$
  then $X\in \Dl[m]$ for some integer $m$.
\end{lemma}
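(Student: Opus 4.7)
The plan is to invoke Lemma~\ref{rem:wa} to choose an integer $A_0 > 0$ so that conditions (1) and (2) of Definition~\ref{def:wa} hold for $G$ and the t-structure $\dtstr$. Condition (1) immediately gives $G \in \Dl[A_0] \cap \Dg[1-A_0]$, and standard t-structure orthogonality then yields the useful bound $\Hom{\cD}{G}{G[m]} = 0$ for every $m \leq -2A_0$. I will exhibit the required integer $A$ as a concrete multiple of $A_0$, rephrasing the hypothesis as $\Hom{\cD}{G[k]}{X} = 0$ for all $k < 0$.

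I would reduce the claim $X \in \Dl[A]$ to showing $Z := \tau^{>A}X$ is zero. Since $G$ is a compact generator, this amounts to verifying $\Hom{\cD}{G[k]}{Z} = 0$ for every $k \in \bZ$. For $k \geq 0$, the inclusions $G[k] \in \Dl[A_0 - k] \subseteq \Dl[A]$ and $Z \in \Dg[A+1]$ yield the vanishing by orthogonality. For $k < 0$, the long exact sequence of $\tau^{\leq A}X \to X \to Z$, combined with the hypothesis (which gives $\Hom{\cD}{G[k]}{X} = \Hom{\cD}{G[k-1]}{X} = 0$ since both $k, k-1 < 0$), collapses to an isomorphism $\Hom{\cD}{G[k]}{Z} \cong \Hom{\cD}{G[k]}{\tau^{\leq A}X[1]}$. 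For $k \leq 1 - A_0 - A$, orthogonality of $\tau^{\leq A}X[1] \in \Dl[A-1]$ against $G[k] \in \Dg[1-A_0-k]$ kills the right-hand side outright.

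The remaining finite range of $k$ is the technical heart. I would dispatch it by applying condition (2) iteratively to $\tau^{\leq A}X[1] \in \Dl[A-1]$, producing a Postnikov-type tower $\cdots \to D_2 \to D_1 \to D_0 := \tau^{\leq A}X[1]$ with tails $D_n \in \Dl[A-1-n]$ and fibres $E_n \in \o{\la G \ra}^{[(A-n)-A_0, (A-n)+A_0]}$. For $n$ large, $\Hom{\cD}{G[k]}{D_n} = 0$ by orthogonality; for the fibres, the identity $\Hom{\cD}{G[k]}{G[j]} = \Hom{\cD}{G}{G[j-k]}$ together with the bound $\Hom{\cD}{G}{G[m]} = 0$ for $m \leq -2A_0$ (and the orthogonality provided by the aisle-membership of each $E_n$) controls $\Hom{\cD}{G[k]}{E_n}$ and $\Hom{\cD}{G[k]}{E_n[1]}$. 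Threading the long exact sequences of the triangles $E_n \to D_{n-1} \to D_n$ then propagates the vanishing $\Hom{\cD}{G[k]}{D_n} = 0$ back to $\Hom{\cD}{G[k]}{D_0} = 0$, closing the case.

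The main obstacle is arranging $A$ (a sufficiently large multiple of $A_0$) so that the small-$n$ regime in which the fibre Hom's vanish dovetails with the large-$n$ regime in which the tail Hom's vanish; the entire propagation through the tower hinges on this careful combinatorial bookkeeping.

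The ``consequently'' clause is a formal shift: if $\Hom{\cD}{\Sigma^{-n}G}{X} = 0$ for all $n \geq N$, the object $Y := X[N-1]$ satisfies the hypothesis of the first part for every $n > 0$, so $Y \in \Dl[A]$, which is equivalent to $X \in \Dl[A + N - 1]$.
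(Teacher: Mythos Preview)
There is a genuine gap. Your claim that condition~(1) gives $G \in \Dg[1-A_0]$ is false: the second half of condition~(1) says $\Hom{\cD}{\Sigma^{-A_0}G}{\Dl} = 0$, which is a statement about maps \emph{out of} $\Sigma^{-A_0}G$, not maps \emph{into} it; it does not place $G$ in any co-aisle. Concretely, take $\cD = D(k[x])$ with $|x|=-1$ and zero differential, which is approximable since $H^n = 0$ for $n>0$. Here $G=k[x]$ satisfies $\Hom{\cD}{G}{G[m]} = H^m(k[x]) \neq 0$ for every $m \leq 0$, so your bound $\Hom{\cD}{G}{G[m]}=0$ for $m \leq -2A_0$ fails, and $G$ lies in no $\Dg[n]$. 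The bound that actually follows from condition~(1) has the opposite sign: $\Hom{\cD}{G}{G[m]}=0$ for $m \geq 2A_0$.

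This breaks both halves of your $k<0$ argument. For $k \leq 1-A_0-A$ you invoke $G[k]\in\Dg[1-A_0-k]$, which is exactly the false claim. For the finite range you feed the Postnikov tower the wrong-sign bound; with the correct bound $\Hom{\cD}{G[k]}{E_m}=0$ only when $m$ is \emph{large} (roughly $m \geq A+3A_0+k$), so you cannot propagate the vanishing from $D_n$ all the way back to $D_0$ for $k$ near $-1$. The leftover window of $k$ has width independent of $A$, so enlarging $A$ does not close it. The paper avoids this entirely: rather than fixing one truncation and building a tower, it applies the approximability triangle to $\tau^{\leq m}X$ for \emph{each} $m>A$, so that the fibre $E\in\o{\la G\ra}^{[m-A,m+A]}$ is built from $\Sigma^{-i}G$ with $i\geq 1$; then $\Hom{\cD}{E}{X}=0$ directly by hypothesis, forcing $\tau^{\leq m}X \to X$ to factor through $\Dl[m-1]$ and hence $\h{m}X=0$.
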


\begin{proof}
By~Lemma~\ref{rem:wa} there exists an integer $A>0$, such that conditions (1) and (2) in~Definition~\ref{def:wa} hold for the compact generator $G$ and the t-structure $(\Dl, \Dg)$.
For any integer $m$ and for the object $X\in\cD$ in the statement of
the current lemma, we
consider the exact triangle
	$$\tau^{\leq m}X\to X\to \tau^{> m}X,$$
where $\tau^{\leq m}, \,\tau^{>m}$ are the truncations with respect to the t-structure $(\Dl, \Dg)$. Applying~\ref{def:wa} (2) to $\tau^{\leq m}X$ gives an exact triangle $E\to \tau^{\leq m}X\to D$ in~$\cD$,
	$$\begin{tikzcd}
          E
          \arrow{d}&&\\
          \tau^{\leq m}X \arrow{r} \arrow{d} &X \arrow{r}
          &\tau^{>m}X\\
          D \arrow[dashrightarrow]{ru}&&
	\end{tikzcd}$$ with
        $E \in \o{\la G \ra}^{[m-A,m+A]}$ and
$D \in \Dl [m-1]$.  Now suppose $m\geq A+1$.
Because 
$\Hom {\cD}{\Sigma^{-n} G} {X} = 0$ for all $n > 0$ and
$E\in\o{\la G \ra}^{[m-A,m+A]}\subset\o{\la G \ra}^{[1,m+A]}$,  we have that 
         there is no nonzero map from $E$ to~$X$. Hence
        $\tau^{\leq m}X\to X$ factors through $\tau^{\leq m}X\to D$.
        Using that $D \in \Dl[m-1]$ this shows
	$$\tau^{\leq m}X\cong \tau^{\leq m-1}X,$$
	in other words $\h m X=0$ for all $m> A$.

Since $\cD$ is
        weakly approximable we know $\Hom {\cD} {\Sigma^{-n} G} {G} = 0$ for $n \gg 0,$ hence by~Lemma~\ref{lem:nondegnG} the t-structure
        $(\Dl, \Dg)$ is non-degenerate.
        By
        \cite[1.3.7]{MR751966}, %if $\h i X=0$ for all $i>m$ in a non-degenerate t-structure, then $X\in\Dl[m]$. Thus
        $X$ is in~$\Dl[A]$. 
      \end{proof}

\begin{lem}\label{lem:neg-gen-means-tstrs-agree}
  Consider a recollement as in~Definition~\ref{def:rec}. Suppose
  $\cD_U$ is weakly approximable with compact generator $G_{U}$, and $\cD$ has a compact
  generator $G'$ such that $\Hom {\cD}
  {\Sigma^{-n} G'} {G'} = 0$ for $n \gg 0$.  We can then find a compact
  generator $G$ of $\cD$ such that the t-structure on $\cD$ generated by $G$ is equal to the gluing of the
  t-structure on $\cD_{F}$ generated by $i^{*} G$ and the t-structure on
  $\cD_{U}$ generated by~$G_{U}$.
\end{lem}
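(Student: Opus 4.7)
The plan is to set $G := \Sigma^m G' \oplus j_!(G_U)$ for a suitable integer $m \geq 0$. To produce $m$, observe that by the adjunction $j_! \dashv j^*$,
\[
\Hom{\cD_U}{\Sigma^{-n} G_U}{j^*G'} \;\cong\; \Hom{\cD}{\Sigma^{-n} j_!(G_U)}{G'},
\]
and since $j_!$ preserves compactness, both $j_!(G_U)$ and $G'$ are compact objects of $\cD$. Lemma~\ref{lem:gen} then yields vanishing for $n \gg 0$, and Lemma~\ref{lem:jG} applied to the weakly approximable category $\cD_U$ produces $m$ with $j^*G' \in \Dl[m]_{G_U}$, i.e.~$\Sigma^m j^*G' \in \Dl_{G_U}$. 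With this $m$, the object $G$ is compact, generates $\cD$ (since $\Sigma^m G'$ is a summand of $G$), and satisfies $i^*G = \Sigma^m i^*G'$ (using $i^*j_! = 0$, which follows by adjunction from $j^*i_* = 0$). Moreover $i^*G$ is a compact generator of $\cD_F$: it is compact because $i^*$ preserves compactness, and generation follows from the adjunction $\Hom{\cD_F}{\Sigma^n i^*G}{Y} \cong \Hom{\cD}{\Sigma^n G}{i_*Y}$ together with the fact that $i_*$ is fully faithful.

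The key claim is the equality $\Dl_G = \Dl_2$, where $\Dl_2 := \{X \in \cD : i^*X \in \Dl_{i^*G},\ j^*X \in \Dl_{G_U}\}$ is the glued aisle. For the inclusion $\Dl_G \subseteq \Dl_2$ I would verify it on the generators $\Sigma^j G$, $j \geq 0$: the computation $i^*\Sigma^j G = \Sigma^j i^*G$ lies in $\Dl_{i^*G}$ by definition, while
\[
j^*\Sigma^j G \;=\; \Sigma^{j+m} j^*G' \,\oplus\, \Sigma^j G_U \;\in\; \Dl_{G_U},
\]
since both summands lie in $\Dl_{G_U}$ (using $\Sigma^m j^*G' \in \Dl_{G_U}$, $G_U \in \Dl_{G_U}$, and that aisles are closed under positive suspension). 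Because $\Dl_2$ is the aisle of a t-structure, it is closed under coproducts, extensions, and summands, which settles this inclusion.

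For the reverse inclusion $\Dl_2 \subseteq \Dl_G$, I would take $X \in \Dl_2$ and apply the recollement triangle $j_!j^*X \to X \to i_*i^*X$. The term $j_!j^*X$ lies in $\Dl_G$: since $j_!(G_U)$ is a direct summand of $G$, each $\Sigma^j j_!(G_U)$ is a summand of $\Sigma^j G \in \Dl_G$, and a standard closure argument (the subcategory $\{Y \in \cD_U : j_!Y \in \Dl_G\}$ contains the generators of $\Dl_{G_U}$ and is closed under coproducts, extensions, and summands) then gives $j_!(\Dl_{G_U}) \subseteq \Dl_G$. For $i_*i^*X$, I would first check $i_*i^*G \in \Dl_G$ by rotating the triangle $j_!j^*G \to G \to i_*i^*G$ into the triangle $G \to i_*i^*G \to \Sigma j_!j^*G$, whose outer terms both lie in $\Dl_G$; a parallel closure argument (using that $i_*$ preserves coproducts since it is left adjoint to $i^!$) then yields $i_*(\Dl_{i^*G}) \subseteq \Dl_G$. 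Closure of $\Dl_G$ under extensions finishes the proof, and the t-structures agree since aisles determine co-aisles. The main technical obstacle will be keeping the shift $m$ straight across the various computations, ensuring in particular that the residual piece $\Sigma^m j^*G'$ lands precisely in $\Dl_{G_U}$ rather than some larger aisle.
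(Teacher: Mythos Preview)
Your proposal is correct and follows essentially the same approach as the paper: define $G = \Sigma^m G' \oplus j_!G_U$ with $m$ obtained via Lemma~\ref{lem:jG}, then verify both inclusions between $\Dl_G$ and the glued aisle using the recollement triangle and closure properties of aisles. Your write-up is slightly more explicit in places (noting $i^*j_! = 0$ and that $i^*G$ generates $\cD_F$), but the argument is the same.
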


\begin{proof}
  Write $(\Dl_{U}, \Dg_{U})$ for the t-structure on $\cD_{U}$
  generated by~$G_{U}$. We note that $j_{!}G_U$ is compact in $\cD$
  because the functor $j_{!}$ preserves compactness (see~\ref{def:rec}).  The second part
  of~Lemma~\ref{lem:gen} now shows
  $$\Hom {\cD_{U}} {\Sigma^{-n} G_U} {j^{*} G'} \cong \Hom
  {\cD}{\Sigma^{-n} j_{!}G_U} {G'} = 0$$ for $n \gg 0$.  It follows
  that $j^{*} G'\in\Dl[m]_{U}$ for some $m>0$ by~Lemma~\ref{lem:jG}.
  Now let $$G:=\Sigma^m G'\oplus j_{!} G_U,$$
  which is clearly a
  compact generator for $\cD$. We write
  $(\Dl_{F}, \Dg_{F})$ for the t-structure on $\cD_{F}$ generated by
  $i^{*} G$ and $(\Dl_\text{gl}, \Dg_\text{gl})$ for the t-structure
  on $\cD$ obtained by gluing $(\Dl_{F}, \Dg_{F})$ and
  $(\Dl_{U}, \Dg_{U})$.  It remains to show that
	$$\Dl_G=\Dl_\text{gl},$$
	where $(\Dl_{G}, \Dg_{G})$ is the t-structure on $\cD$
        generated by $G$.  First, we note that $i^{*}G\in \Dl_{F}$ and
        $j^{*}G= \Sigma^m j^{*}G'\oplus G_U\in \Dl_{U}$.  Using that
        the functors $i^{*}$ and $j^{*}$ commute with coproducts, this
        shows $i^{*}\Dl_{G}\subset \Dl_{F}$ and
        $j^{*}\Dl_{G}\subset \Dl_{U}$.  Hence,
        $$\Dl_{G}\subset \Dl_\text{gl}.$$ On the other hand, we
        clearly have $j_{!}G_U\in \Dl_G$ and thus
        $j_{!}\Dl_{U}\subset\Dl_{G}$, because the functor $j_{!}$
        commutes with coproducts. Since $i_{*}i^{*}G$ fits in an exact triangle
	$$j_{!}j^{*}G\to G\to i_{*}i^{*}G$$
	with $j_{!}j^{*}G, \,G\in \Dl_{G}$, we see that
        $i_{*}i^{*}G\in\Dl_{G}$.  It follows that
        $i_{*}\Dl_{F}\subset \Dl_{G}$.  Now, let $X\in\Dl_\text{gl}$
        and consider the exact triangle
	$$j_{!}j^{*}X\to X\to i_{*}i^{*}X.$$
	By the above, $j_{!}j^{*}X,\, i_{*}i^{*}X\in\Dl_{G}$, so
        $X\in\Dl_{G}$.  We conclude that $\Dl_{G}= \Dl_\text{gl}$.
      \end{proof}
  
Combining the above lemma with~Lemma~\ref{lem:stablegl}, we get:
  
  \begin{cor}\label{importantcorollary}
  	Consider a recollement as in~Definition~\ref{def:rec}. Suppose
  	$\cD_U$ is weakly approximable and $\cD$ has a compact
  	generator $G$ such that $\Hom {\cD}
  	{\Sigma^{-n} G} {G} = 0$ for~$n \gg 0$. Then the t-structure on $\cD$ obtained by gluing t-structures on $\cD_F$ and $\cD_U$, both of which
        are in the preferred equivalence class, is in the preferred equivalence class.
  \end{cor}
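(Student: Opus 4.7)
The plan is to combine Lemma~\ref{lem:neg-gen-means-tstrs-agree} with Lemma~\ref{lem:stablegl}. The essential insight is that Lemma~\ref{lem:neg-gen-means-tstrs-agree} already exhibits \emph{one} gluing of preferred-class t-structures that lies in the preferred class on $\cD$; Lemma~\ref{lem:stablegl} then upgrades this to \emph{every} such gluing by transporting across equivalences.

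First, I would fix any compact generator $G_U$ of $\cD_U$, so that the t-structure it generates lies in the preferred equivalence class on $\cD_U$. Applying Lemma~\ref{lem:neg-gen-means-tstrs-agree} to the given compact generator $G$ of $\cD$ produces a new compact generator $G''$ of $\cD$ with the property that the t-structure generated by $G''$ on $\cD$ is literally equal to the gluing of the t-structure generated by $i^*G''$ on $\cD_F$ and the t-structure generated by $G_U$ on $\cD_U$. By definition, this glued t-structure lies in the preferred equivalence class on $\cD$. I would also need to check that $i^*G''$ is a compact generator of $\cD_F$: compactness is immediate from~\ref{def:rec}, and generation follows because for $Y\in\cD_F$ with $\Hom{\cD_F}{\Sigma^n i^*G''}{Y}=0$ for all $n$, adjunction gives $\Hom{\cD}{\Sigma^n G''}{i_*Y}=0$ for all $n$, forcing $i_*Y=0$ and hence $Y=0$ by fully faithfulness of $i_*$. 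Thus the t-structure generated by $i^*G''$ is in the preferred equivalence class on $\cD_F$.

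Finally, given arbitrary t-structures $\dtstr[{\cD_F'}]$ on $\cD_F$ and $\dtstr[{\cD_U'}]$ on $\cD_U$ in their respective preferred equivalence classes, they are by definition equivalent to the t-structures generated by $i^*G''$ and $G_U$. Lemma~\ref{lem:stablegl} then shows that the gluing of $\dtstr[{\cD_F'}]$ and $\dtstr[{\cD_U'}]$ is equivalent to the gluing built from $i^*G''$ and $G_U$, which we have already placed in the preferred class on $\cD$; hence so is the new gluing. I do not expect any real obstacle: the argument is formal bookkeeping, and the only mild subtlety is the verification that $i^*G''$ compactly generates $\cD_F$, which is needed to invoke the notion of preferred equivalence class on $\cD_F$.
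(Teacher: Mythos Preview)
Your proposal is correct and follows exactly the paper's approach: the paper simply states that the corollary follows by combining Lemma~\ref{lem:neg-gen-means-tstrs-agree} with Lemma~\ref{lem:stablegl}, which is precisely the argument you outline. Your additional verification that $i^*G''$ compactly generates $\cD_F$ is a helpful elaboration that the paper leaves implicit.
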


\section{Gluing $\Dcminus$}
\begin{defn}\cite[Definition 0.16]{ApproxAmnon}\ \ 
  Let $\cD$ be a triangulated category with t-structure $\dtstr.$
  Recall that we write $\cD^+:=\bigcup_{m > 0} \Dg [-m]$,  $\cD^-:=\bigcup_{m > 0} \Dl [m]$ and $\cD^{b} = \cD^+\cap\cD^-$.
  The full subcategory $\Dcminus$ has for objects all the $X\in\cD$ such that,
  for any integer
  $m > 0$, there exists an exact triangle $E \to X \to E'$ with $E$ compact
  and $E' \in \Dl [-m]$. The subcategory $\Dcb$ is
  defined to be $\Dcminus \cap \cD^{b}$.
\end{defn}

\begin{rem}\label{preferred-Ds}
Observe that equivalent t-structures yield equal
$\cD^+,\ \cD^-,\ \cD^b,\ \Dcminus,\ \Dcb$. 
If $\cD$ has a single compact generator, form the subcategories
$\cD^+,\ \cD^-,\ \cD^b,\ \Dcminus,\ \Dcb$
corresponding to the preferred equivalence
class of t-structures. These are intrinsic. 
\end{rem}

\begin{rem}\label{thickness}
	Suppose $\cD$ has a compact generator $G$ such that $\Hom {\cD}{\Sigma^{-n} G} {G} = 0$ for $n \gg 0$, and endow $\cD$ with a t-structure in the preferred equivalence class. 
	By~\cite[Proposition~0.19 and Remark~0.20]{ApproxAmnon},
	%if $\cD$ has acompact generator $G$ such that $\Hom {\cD}{\Sigma^{-n} G} {G} = 0$ for $n \gg 0,$ then 
	the subcategories $\Dcb \subseteq \Dcminus$ are thick triangulated subcategories of~$\cD^{-}.$

        If $\cD$ is weakly approximable then the above applies: one
        easily sees that $\cD$ has a compact
        generator $G$ with $\Hom {\cD}{\Sigma^{-n} G} {G} = 0$ for $n\gg0$.
\end{rem}

\begin{lemma}
 Consider a recollement as in~Definition~\ref{def:rec}. Suppose
    $\cD_U$ is weakly approximable, and $\cD$ has a compact
    generator $G$ such that $\Hom {\cD}
    {\Sigma^{-n} G} {G} = 0$ for $n \gg 0$.  
    Then the intrinsic categories
    $(\cD_U)^-_c$, $(\cD_U)^b_c$, $\cD^-_c$, $\cD_c^b$, $(\cD_F)^-_c$
    and $(\cD_F)^b_c$,
    corresponding to 
    the preferred equivalence
    class of t-structures as in Remark~\ref{preferred-Ds},
    are all thick, triangulated subcategories of (respectively) $\cD_U$, $\cD$ and $\cD_F$.
\end{lemma}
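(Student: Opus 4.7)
My plan is to prove the lemma by verifying, for each of $\cD$, $\cD_U$, and $\cD_F$, the hypothesis of Remark~\ref{thickness}---namely, the existence of a compact generator whose negative-shift self-Homs eventually vanish---and then quoting that remark. For $\cD$ this hypothesis is part of the lemma's assumptions, and for $\cD_U$ it follows from weak approximability via the final sentence of Remark~\ref{thickness}. The case of $\cD_F$ is the main obstacle and will take most of the work.

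For $\cD_F$, I take $i^*G$ as the candidate compact generator. Compactness is automatic since $i^*$ preserves compactness by~\ref{def:rec}. Generation holds because, if some $X \in \cD_F$ were right-orthogonal to all shifts of $i^*G$, then $\Hom{\cD}{\Sigma^n G}{i_*X} \cong \Hom{\cD_F}{\Sigma^n i^*G}{X} = 0$ for all $n$, forcing $i_*X = 0$ and hence $X = 0$ by full faithfulness of $i_*$. The remaining task is to show $\Hom{\cD_F}{\Sigma^{-n}i^*G}{i^*G} = 0$ for $n \gg 0$, which via the adjunction $i^* \dashv i_*$ becomes $\Hom{\cD}{\Sigma^{-n}G}{i_*i^*G} = 0$.

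To establish this vanishing, I bring in the glued t-structure. Pick a compact generator $G_U$ of $\cD_U$ with the vanishing property (which exists by weak approximability, as above), endow $\cD_F$ with the t-structure generated by $i^*G$ and $\cD_U$ with the t-structure generated by $G_U$, and glue them along the recollement; Corollary~\ref{importantcorollary} then guarantees that the resulting t-structure on $\cD$ lies in the preferred equivalence class. Since $i_*$ is t-exact for the glued t-structure by~\ref{def:texact} and $i^*G$ lies in the aisle of its own generated t-structure on $\cD_F$, we conclude $i_*i^*G \in \Dl$ in the glued t-structure on $\cD$. Now Lemma~\ref{lem:gen}(1), applied to this t-structure in the preferred class, supplies an integer $C > 0$ with $\Hom{\cD}{G}{\Dl[-C]} = 0$, so for $n \geq C$, the containment $\Sigma^n i_*i^*G \in \Dl[-n] \subset \Dl[-C]$ yields $\Hom{\cD}{\Sigma^{-n}G}{i_*i^*G} \cong \Hom{\cD}{G}{\Sigma^n i_*i^*G} = 0$. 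This completes the verification that $i^*G$ satisfies the required vanishing on $\cD_F$, and the conclusion of the lemma follows from Remark~\ref{thickness} applied to all three categories.
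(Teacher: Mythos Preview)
Your proof is correct and follows essentially the same approach as the paper's: reduce to showing that $i^*G$ is a compact generator of $\cD_F$ with eventually vanishing negative self-Homs, and establish the vanishing via the glued t-structure, the t-exactness of $i_*$, and Lemma~\ref{lem:gen}(1). Your argument is in fact slightly more streamlined---you use directly that $i^*G \in \cD_F^{\leq 0}$ for the t-structure it generates, whereas the paper instead invokes \cite[Observation~0.20(ii)]{ApproxAmnon} to place $i^*G$ in $\cD_F^-$; and you supply the (easy) verification that $i^*G$ generates $\cD_F$, which the paper asserts without comment.
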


\begin{proof}
For the categories
$(\cD_U)^-_c$, $(\cD_U)^b_c$, $\cD^-_c$ and $\cD_c^b$
the assertion is immediate from Remark~\ref{thickness}. What we will prove
is that the category $\cD_F$ has a compact generator $H$ with
$\Hom {\cD_F}{\Sigma^{-n} H} {H} = 0$ for $n \gg 0$.

    Choose for $\cD_F,\ \cD_U$ t-structures in the preferred equivalence class,
    and glue them to form a t-structure on $\cD$. By
    Corollary~\ref{importantcorollary} the glued t-structure on $\cD$ is 
    in the preferred equivalence class.
Pick a compact generator $G\in\cD$.

Lemma~\ref{lem:gen}(1) allows us to choose an integer $C>0$ with
$\Hom {\cD}{G} {\cD^{\leq-C}} = 0$. As $i_*$ is t-exact we have
$i_*\cD_F^{\leq-C}\subset\cD^{\leq-C}$, and hence
$$\Hom {\cD_F}{i^*G} {\cD_F^{\leq-C}}\cong\Hom {\cD}{G} {i_*\cD_F^{\leq-C}} = 0.$$
But the t-structure on $\cD_F$ is in the preferred equivalence class,
and \cite[Observation~0.20(ii)]{ApproxAmnon} informs us that the
compact object $i^*G\in\cD_F$ must be contained in $\cD_F^-$. Hence
we may choose an integer $B>0$ with $i^*G\in\cD_F^{\leq B}$.

But now it's immediate that the compact generator $i^*G\in\cD_F$ is such that
$\Hom {\cD_F}{\Sigma^{-n} i^*G} {i^*G} = 0$ for $n\geq B+C$.
\end{proof}

  \begin{prop}\label{prop:gluing-dcminus}
    %Consider a recollement that satisfies the hypotheses of~Lemma~\ref{lem:neg-gen-means-tstrs-agree}. 
    Consider a recollement as in~Definition~\ref{def:rec}. Suppose
    $\cD_U$ is weakly approximable, and $\cD$ has a compact
    generator $G$ such that $\Hom {\cD}
    {\Sigma^{-n} G} {G} = 0$ for $n \gg 0$.  
    With $\cC$ standing for any of $\cD_U$, $\cD$ or $\cD_F$, let the categories
    $\cC^-_c$, $\cC_c^b$ and $\cC^+$ be the intrinsic ones coming from the preferred equivalence
    class of t-structures, see Remark~\ref{preferred-Ds}.
    If  $j^{*} G$ is in
    $(\cD_{U})^{-}_{c},$ then there  are equalities:
    \begin{align*}
      \Dcminus &= \left \{ X \in \cD \, | \, i^{*} X \in  (\cD_{F})^{-}_{c}
                 \text{ and } j^{*} X \in  (\cD_{U})^{-}_{c} \right \},\\
                 \Dcb \,
               &= \left \{ X \in \cD \, | \, i^{*} X \in
                 (\cD_{F})^{-}_{c},\  j^{*} X \in (\cD_{U})^{b}_{c} \text{ and }  i^{!} X \in
                 (\cD_{F})^{+}\right \}.
    \end{align*}
  \end{prop}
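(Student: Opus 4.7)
The plan is to fix preferred t-structures throughout, prove the two inclusions of the first equality via the recollement triangle, and then deduce the second equality formally. By the lemma immediately preceding the Proposition, $\cD_F$ has a compact generator satisfying the vanishing condition, so the preferred equivalence class on $\cD_F$ is defined. I would choose t-structures in the preferred classes on $\cD_F$ and $\cD_U$, glue them along the recollement, and apply Corollary~\ref{importantcorollary} to place the glued t-structure in the preferred class on $\cD$; by Remark~\ref{preferred-Ds} all the intrinsic subcategories in the statement can then be computed from this single t-structure.

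For the inclusion $\Dcminus \subseteq \{\,X : i^*X \in (\cD_F)^-_c,\ j^*X \in (\cD_U)^-_c\,\}$, given $X \in \Dcminus$ and $m > 0$ I pick a compact approximation $E \to X$ with cone $E' \in \cD^{\leq -m}$. Since $i^*$ preserves compactness and is right t-exact, one obtains $i^*X \in (\cD_F)^-_c$ directly. For $j^*X$ the functor $j^*$ is right t-exact but need not preserve compactness; instead, $E$ lies in the thick subcategory of $\cD$ generated by $G$, so $j^*E$ lies in the thick subcategory of $\cD_U$ generated by $j^*G \in (\cD_U)^-_c$, which is contained in $(\cD_U)^-_c$ by Remark~\ref{thickness}. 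An octahedron combining a further $\cD_U$-compact approximation $F \to j^*E$ with the map $j^*E \to j^*X$ then yields a compact approximation of $j^*X$ with cone in $(\cD_U)^{\leq -m'}$ for any prescribed $m' > 0$, giving $j^*X \in (\cD_U)^-_c$.

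For the reverse inclusion, I would use the recollement triangle $j_!j^*X \to X \to i_*i^*X$ together with the thickness of $\Dcminus$ (Remark~\ref{thickness}) to reduce to showing both outer terms lie in $\Dcminus$. The term $j_!j^*X$ is easy: $j_!$ preserves compactness and is right t-exact, so it sends a $\cD_U$-compact approximation of $j^*X$ to a compact approximation of $j_!j^*X$ with cone in $\cD^{\leq -m}$. The main obstacle is proving $i_*W \in \Dcminus$ for $W := i^*X \in (\cD_F)^-_c$, since $i_*$ does not in general preserve compactness. Given $m > 0$, the goal is a compact $E \in \cD$ and map $E \to i_*W$ with cone in $\cD^{\leq -m}$; as $j^*i_*W = 0$ this reduces to arranging (a) $\cone{i^*E \to W} \in (\cD_F)^{\leq -m}$ and (b) $j^*E \in (\cD_U)^{\leq -m-1}$. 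Condition (a) is where the real technical work lies: one must use that $(\cD_F)^c$ is the thick subcategory of $\cD_F$ generated by $i^*G$ to build a compact $\tilde{E} \in \cD$ whose $i^*$-image approximates $W$ — care is needed here, since $i^*$ on compacts is not automatically essentially surjective, and one must work with summands and possibly iterate. Once such an $\tilde{E}$ is produced, condition (b) can be arranged as a separate step: $j^*\tilde{E} \in (\cD_U)^-_c$ by the same thickness argument, so I would pick a $\cD_U$-compact approximation $F_U \to j^*\tilde{E}$ with cone in $(\cD_U)^{\leq -m-1}$, lift via $j_! \dashv j^*$ to $j_!F_U \to \tilde{E}$, and set $E := \cone{j_!F_U \to \tilde{E}}$. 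Then $i^*j_! = 0$ gives $i^*E \cong i^*\tilde{E}$ (preserving (a)), and $j^*j_! = \mathrm{id}$ gives $j^*E \cong \cone{F_U \to j^*\tilde{E}} \in (\cD_U)^{\leq -m-1}$ (establishing (b)); the adjunction $i^* \dashv i_*$ then produces the required map $E \to i_*W$.

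For the second equality I would use $\Dcb = \Dcminus \cap \cD^b$ and apply the first equality to handle the $\Dcminus$ part. For $\cD^b = \cD^+ \cap \cD^-$, the definition of the glued t-structure gives $X \in \cD^{\geq -m}$ iff $i^!X \in (\cD_F)^{\geq -m}$ and $j^*X \in (\cD_U)^{\geq -m}$, so $X \in \cD^+$ iff $i^!X \in (\cD_F)^+$ and $j^*X \in (\cD_U)^+$. Combining the conditions, $j^*X$ lies in $(\cD_U)^-_c \cap (\cD_U)^+ = (\cD_U)^b_c$, matching the stated description.
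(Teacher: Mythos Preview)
Your treatment of the forward inclusion and of the second equality is essentially that of the paper. For the reverse inclusion of $\Dcminus$ you take a different organizational route: the paper approximates $X$ directly, whereas you split along the recollement triangle $j_!j^*X \to X \to i_*i^*X$ and invoke thickness of $\Dcminus$. This is a legitimate alternative and even buys a small simplification: once a compact $\tilde E\in\cD$ with $i^*\tilde E$ approximating $W$ is in hand, the map $\tilde E\to i_*W$ comes for free from the adjunction $i^*\dashv i_*$, whereas the paper, working with $X$ itself, must invoke \cite[Proposition~4.4.1]{MR1812507} to lift the morphism to $X$.

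The genuine gap is your step (a). You correctly flag that $i^*$ need not be essentially surjective on compacts, but ``work with summands and possibly iterate'' does not close it. If $E'$ is merely a summand of some $i^*F$, the complementary summand need not lie in $(\cD_F)^{\leq -m-1}$, so extending $E'\to W$ by zero on the complement produces a map $i^*F\to W$ whose cone is not small, and no evident iteration repairs this. What is actually needed is the $K_0$ trick the paper uses: choose an odd integer $n\gg 0$ with $\Sigma^n E'\in(\cD_F)^{\leq -m}$; then $[E'\oplus\Sigma^n E']=0$ in $K_0\big((\cD_F)^c\big)$, and \cite[Corollary~4.5.14]{MR1812507} guarantees that $E'\oplus\Sigma^n E'$ lies in the essential image of $i^*$ restricted to $\cD^c$, say $E'\oplus\Sigma^n E'\cong i^*\tilde E$. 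The composite $i^*\tilde E\to E'\to W$ then has cone $D'\oplus\Sigma^{n+1}E'\in(\cD_F)^{\leq -m}$, giving exactly the $\tilde E$ your outline requires; with this in place your step (b) and the remaining argument go through as written.
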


  \begin{proof}
  Choose t-structures for each of $\cD_F,\ \cD_U$, in the preferred equivalence
  classes,
    and glue them to form a t-structure on $\cD$. By
    Corollary~\ref{importantcorollary} the glued t-structure on $\cD$ is 
    in the preferred equivalence class.
    We are assuming that,
  for the compact generator $G\in\cD$,
  we have $j^{*} G \in (\cD_{U})^{-}_{c}$. As $ (\cD_{U})^{-}_{c}$ is
  a thick triangulated subcategory which contains $j^{*}G$, it must also contain 
  $j^*\cD^c$, since $\cD^c$ is the smallest thick triangulated subcategory of $\cD$ containing $G$.
 	
  We first show the left sides are contained in the right sides. Let $X$ be in
  $\Dcminus,$ fix $m > 0,$ and let $E \to X \to D$ be an exact triangle in $\cD$ with
  $E \in \cD^{c}$ and $D \in \Dl [-m].$ %By the definition of glued
  %t-structure, see \ref{def:gluing}, 
  Since $i^{*} D \in \Dl[-m]_{F}$ and $i^{*}$ preserves
  compactness (see~\ref{def:rec}), the exact triangle
  $i^{*} E \to i^{*} X \to i^{*} D$ shows that $i^{*} X$ is in
  $(\cD_{F})_{c}^{-}.$ Now consider the exact triangle $j^{*} E \to j^{*} X \to j^{*} D$
  with $j^{*} D \in \cD_{U}^{\leq -m}$. By the first paragraph of
the proof
  $j^*E \in (\cD_{U})^{-}_{c}$,
  hence we can find an exact triangle
  $\t E \to j^{*} E \to \t D$ with $\t E$ in $(\cD_{U})^{c}$ and $\t D$
  in $\cD_{U}^{\leq -m}.$ The octahedral axiom, applied to
  $\t E \to j^{*}E \to j^{*}X$, gives an object $D'$ and
  exact triangles $\t E \to j^{*} X \to D'$ and  $\t D \to D' \to j^{*} D$ in $\cD_U$ such
  that the following diagram is commutative:
  \begin{displaymath}
    \begin{tikzcd}
      \t E \arrow{d} \arrow[dotted]{rd}
      &&\\
      j^{*} E \arrow{d} \arrow{r} & j^{*} X \arrow[dotted]{rd}
      \arrow{r}
      & j^{*}D\\
      \t D \arrow[dotted]{rr} && D'.  \arrow[dotted]{u}
    \end{tikzcd}
  \end{displaymath}
  Since $\t D$ and $j^{*} D$ are in $\cD_{U}^{\leq -m},$ we know $D'$ is also in $\cD_{U}^{\leq -m}$. The exact triangle
  $\t E \to j^{*} X \to D'$ now shows that $j^{*} X$ is in
  $(\cD_{U})_{c}^{-}$. If we assume further that $X$ is in $\Dcb =
  \Dcminus \cap \cD^{b},$ then
  we have $j^{*} X \in \cD^{b}_{U}$ and $i^{!} X \in (\cD_{F})^{+}$ because $j^*$ is t-exact and $i^!$ is left t-exact.

  % If $X$ is in $\cD^{b},$ then $i^{*} X$ is in $\cD_{F}^{b}$ and
  % $j^{*} X$ is in $\cD_{U}^{b},$ because the t-structure on $\cD$ is
  % the gluing of the t-structures on $\cD_{U}$ and $\cD_{F}.$

  We now show the right sides are contained in the left sides. Assume $i^{*} X \in (\cD_{F})^{-}_{c}$ and
  $j^{*} X \in (\cD_{U})^{-}_{c}$, and fix $m > 0$. By definition there exists
  an exact triangle
  \[E' \to i^{*} X \to D'\]
  with $E' \in (\cD_{F})^{c}\subset\cD^-$ and $D ' \in \cD_{F}^{\leq -m}$.
  Choose an odd integer $n>0$ with $\Sigma^n E'\in\cD_F^{\leq-m}$. The object
  $E'\oplus \Sigma^nE'$ vanishes in $K_0(\cD_{F})$, and
  \cite[Corollary 4.5.14]{MR1812507} tells us that
  $E'\oplus \Sigma^nE'$ must therefore lie in the image of $i^*$.
  And then \cite[Proposition 4.4.1]{MR1812507}, applied to 
  the composite $E'\oplus\Sigma^nE'\to E' \to i^{*} X$,  
  allows us to find a morphism $E'' \to X$ in $\cD,$ with $E''$ in $\cD^{c},$
  and such
  that $i^{*}(E'' \to X)$ is isomorphic to $E'\oplus\Sigma^nE' \to E'\to i^{*} X.$ Complete $E'' \to X$ to an exact
  triangle 
  $$E'' \to X \to D''.$$
  Since $i^*D''\cong D'\oplus\Sigma^{n+1}E'$ we deduce that
  $i^*D''\in\cD_F^{\leq-m}$.
  
  The first paragraph of the proof tells us that
  $j^{*} E'' \in (\cD_{U})_{c}^{-}$,  while
$j^{*} X \in (\cD_{U})_{c}^{-}$ by hypothesis.
  Thus $j^{*} D''$ is in
  $(\cD_{U})_{c}^{-}$. Fix an exact triangle
  $\t E \to j^{*} D'' \to \t D$ with $\t E \in (\cD_{U})^{c}$ and
  $\t D \in \Dl[-m]_{U}.$ We now have the following diagram 
$$
\begin{tikzcd}
  & j_{!}\t E \arrow{r} &
  j_{!} j^{*} D'' \arrow{r}\arrow{d}& j_{!} \t D\\
  E'' \arrow{r}
  & X \ar[r] & D'' \ar[d]& \\
  && i_{*}i^{*} D'', &
\end{tikzcd}
$$
with $E''\in \cD^{c}, \ \t E \in (\cD_{U})^{c}$ %j^{*} D'' \in(\cD_{U})^{-}_{c},$ 
and $\t D \in \cD_{U}^{\leq -m}$.
Applying the octahedral axiom, to $j_{!} \t E \to j_{!}j^{*} D'' \to D''$,
we find an object $D$ and exact triangles
$j_{!} \t E \to D''\to D$ and $j_{!}\t D \to D \to i_{*} i^{*} D''$ in $\cD$
that fit into the following commutative diagram:
$$
\begin{tikzcd}
  j_{!}\t E \ar[r] \ar[dr, dotted] &
  j_{!} j^{*} D'' \ar[r]\ar[d] & j_{!} \t D \ar[dd, dotted]\\
  & D'' \ar[d] \ar[dr, dotted] & \\
  & i_{*}i^{*} D'' & D.\ar[l, dotted]
\end{tikzcd}
$$
Since $i^{*} D''  \in \cD_{F}^{\leq -m}$ and $i_{*}$ is
t-exact, we know $i_{*}i^{*} D'' \in \Dl[-m]$. Since $\t D \in \Dl[-m]_{U}$ and
$j_{!}$ is right t-exact, we know $j_{!} \t D \in \Dl[-m].$ Thus $D \in \Dl [-m].$

Next we complete the octahedron on $X \to D'' \to D$, and find an object
$E$ with exact triangles $E \to X \to D$ and
$E'' \to E \to j_{!} \t E$ in $\cD$, fitting into the following commutative
diagram:
$$
\begin{tikzcd}
  E \ar[rr, dotted] \ar[dr, dotted]&& j_{!}\t E \ar[d] &\\
  E'' \ar[u, dotted] \ar[r] & X \ar[r] \ar[dr, dotted]& D'' \ar[d]\\
  && D.
\end{tikzcd}
$$
Note that $j_{!}\t E$ is compact because $j_{!}$ preserves compactness. Since $E''$ is also compact, so is $E$. 
Hence the exact triangle $E \to X \to D$ shows that $X$ is in $\Dcminus.$
If we now further assume that $i^{!} X \in (\cD_{F})^{+}$ and 
$j^{*} X \in (\cD_{U})^{b},$ then $X \in \cD^{b}$ by the definition of
glued t-structure.
\end{proof}

  \section{Gluing approximability}

  The time has come to prove the main theorem. For the reader's convenience
  we recall the statement.
  
\begin{theorem}\label{thm:gluing-result}
  Let $\cD_{F}$ and $\cD_{U}$ be approximable triangulated categories
  and let $\cD$ be a compactly generated triangulated category. Assume
  there is a recollement
  \begin{displaymath}
    \begin{tikzcd}
      \cD_{F} \arrow[r, "i_{*}"] & \cD \arrow[l,shift left = 4.3,
      "i^{!}"']  \arrow[l,shift right = 4.3, "i^{*}"'] \arrow[r,
      "j^{*}"] & \cD_{U}.  \arrow[l,shift left = 4.3, "j_{*}"']
      \arrow[l, shift right = 4.3, "j_{!}"']
    \end{tikzcd}
  \end{displaymath}
  Then $\cD$ has a compact generator $G$. If moreover
  $\Hom {\cD}{\Sigma^{-n} G} {G} = 0$ for~$n \gg 0,$ then $\cD$ is
  approximable.
\end{theorem}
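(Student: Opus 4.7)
My plan is to verify conditions (2) and (3) of Definition~\ref{def:wa} for $\cD$, combining the approximability of $\cD_{F}$ and $\cD_{U}$ with the Thomason--Neeman lifting technique of the proof of Proposition~\ref{prop:gluing-dcminus}.

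For the setup I pick t-structures on $\cD_{F}$ and $\cD_{U}$ in the preferred equivalence classes, with a common integer $A$ witnessing condition (3) of Definition~\ref{def:wa} for both via Lemma~\ref{rem:wa}. By Corollary~\ref{importantcorollary} the glued t-structure on $\cD$ is in the preferred equivalence class, and by Lemma~\ref{lem:neg-gen-means-tstrs-agree} I may replace the given compact generator by one of the form $G=\Sigma^{m}G_{0}\oplus j_{!}G_{U}$ whose generated t-structure is the glued one; in particular $G_{F}:=i^{*}G$ is a compact generator of $\cD_{F}$. Enlarging $A$, condition (1) of Definition~\ref{def:wa} also holds, so only condition (3) remains.

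Given $X\in\cD^{\leq 0}$, I proceed in two stages as in the final argument of Proposition~\ref{prop:gluing-dcminus}. First, I approximate $i^{*}X$ in $\cD_{F}$ by $E_{F}\to i^{*}X\to D_{F}$ with $E_{F}\in\o{\la G_{F}\ra}^{[-A,A]}_{A}$ and $D_{F}\in\cD_{F}^{\leq -1}$, choose an odd $n\geq A+1$, and apply the Thomason--Neeman argument of \cite{MR1812507} to obtain a compact $\tilde E\in\cD^{c}$ with $i^{*}\tilde E\cong E_{F}\oplus\Sigma^{n}E_{F}$ and a map $\tilde E\to X$ whose $i^{*}$ factors as $E_{F}\oplus\Sigma^{n}E_{F}\twoheadrightarrow E_{F}\to i^{*}X$; completing to $\tilde E\to X\to D''$ gives $i^{*}D''\in\cD_{F}^{\leq -1}$. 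Second, since $\tilde E$ is compact, Lemmas~\ref{lem:gen} and~\ref{lem:jG} bound $j^{*}\tilde E$ and hence $j^{*}D''$ above by some integer $M$; a shift followed by approximability of $\cD_{U}$ yields $\tilde E_{U}\to j^{*}D''\to\tilde D_{U}$ with $\tilde E_{U}\in\o{\la G_{U}\ra}^{[M-A,M+A]}_{A}$ and $\tilde D_{U}\in\cD_{U}^{\leq -1}$. Lifting via $j_{!}$ (right t-exact, preserves coproducts) and applying the octahedral axiom exactly as in the final octahedron of Proposition~\ref{prop:gluing-dcminus} then produces a triangle $E\to X\to D$ with $D\in\cD^{\leq -1}$ and $E$ an extension of $\tilde E$ by $j_{!}\tilde E_{U}$.

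The hard part will be ensuring $E\in\o{\la G\ra}^{[-A',A']}_{A'}$ for an $A'$ depending only on the recollement data and $A$ (not on $X$), whereas the construction above naively gives only $E\in\cD^{c}$. The decisive refinement is to make the Thomason--Neeman lift $\tilde E$ itself lie in $\o{\la G\ra}^{[-B,B]}_{B}$ for a uniform $B$, which I would achieve by mirroring the $A$-stage iterated construction of $E_{F}$ from $\Coprod_{A}(G_{F}[-A,A])$ step-by-step via $\Coprod_{A}(G[-A,A])$, using $G_{F}=i^{*}G$ together with compatibility of $i^{*}$ with coproducts. Once $\tilde E\in\o{\la G\ra}^{[-B,B]}_{B}$, t-exactness of $j^{*}$ forces $j^{*}\tilde E\in\cD_{U}^{\leq A_{0}+B}$ (with $A_{0}$ satisfying $G\in\cD^{\leq A_{0}}$), so the bound $M$ becomes uniform in $X$, and assembling everything yields $E\in\o{\la G\ra}^{[-A',A']}_{A'}$ for an $A'$ computable from $A$, $n$, and $B$, verifying condition (3).
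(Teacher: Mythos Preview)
Your outline matches the paper's strategy almost exactly: set up the glued t-structure, approximate $i^{*}X$ in $\cD_{F}$, lift to $\cD$, then use approximability of $\cD_{U}$ on $j^{*}D''$ and two octahedra. The paper's Lemmas~\ref{lem:gap}--\ref{lem:generalcase} implement precisely your ``decisive refinement,'' namely an inductive lifting of the morphism $E_{F}\to i^{*}X$ with $E_{F}\in\o{\la i^{*}G\ra}_{A}^{[-A,A]}$ to a morphism $E\to X$ with $E\in\o{\la G\ra}_{B}^{[-B,B]}$ for a uniform~$B$.

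However, your description of that refinement is where the real work hides, and your sketch underestimates it. You cannot lift ``step-by-step via $\Coprod_{A}(G[-A,A])$'' using only $G_{F}=i^{*}G$ and compatibility with coproducts: a morphism $\Sigma^{n}i^{*}G\to i^{*}X$ in $\cD_{F}$ is represented only by a \emph{roof} $\Sigma^{n}G\leftarrow Y\to X$ in $\cD$ with $Y$ an a priori arbitrary compact object (this is the calculus-of-fractions description of $\cD_{F}$ as a Verdier quotient, \cite[4.4.1]{MR1812507}). Controlling $Y$ is the content of Lemma~\ref{lem:gap}, and then Lemma~\ref{lem:case-of-n-equals-1} uses the approximability of $\cD_{U}$ \emph{already in the base case} to replace the cone $j_{!}Z$ of $Y\to\Sigma^{n}G$ by something in $\o{\la j_{!}H\ra}^{[\,\cdot\,,\,\cdot\,]}_{*}$, so that the resulting lift $E$ lands in $\o{\la G\ra}^{[-B,B]}_{B}$. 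In particular the lift of a single $\Sigma^{n}i^{*}G$ is \emph{not} $\Sigma^{n}G$ but an extension of it by a $j_{!}$-term. Your two-stage picture, where $\cD_{U}$-approximability enters only after the lift is constructed, is therefore not quite right: it is needed twice, once to build the bounded lift and once more in the final octahedron. The $K_{0}$ trick from Proposition~\ref{prop:gluing-dcminus} that you invoke gives no handle on the complexity of $\tilde E$ and should be abandoned in favor of the direct inductive lifting.
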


We will prove the theorem via a series of lemmas. If $\cD_F$ is approximable with compact generator $G_F$, then there exists $A>0$ such that for every object $X$ of $\Dl$, we can find an exact triangle
$$E'\xra{f'} i^{*}X\to D'$$ in $\cD_F$ with $E'\in \o{\la G_F \ra}_{A}^{[-A,A]}$ and $D'\in \cD_F^{\leq -1}$.
To prove the theorem, we will show how to lift a morphism $E'\xra{f'} i^*X$ in $\cD_F$ to a morphism $E\xra{f} X$ in $\cD,$ for increasingly complicated~$E'$.

\begin{lem}\label{lem:gap}
   Consider a recollement that satisfies the hypotheses
   of~Corollary~\ref{importantcorollary}, and let $\dtstr$ be
   a t-structure in the preferred equivalence class on~$\cD.$ For every
   integer $A>0$ there exists an integer $B>0$ such that, if
  $f':i^{*}K\rightarrow i^{*}X$ is a morphism in $\cD_F$ with $K$
  compact and $K,X\in\Dl[A]$, then there exists an object $Y\in\Dl[B]$
  and morphisms
  \begin{displaymath}
    \begin{tikzcd}
      K& Y \ar[l, "\psi"'] \ar[r, "f"] &X,
    \end{tikzcd}
  \end{displaymath}
  such that $i^{*}(\psi)$ is an isomorphism in $\cD_F$ and
  $f' i^{*}(\psi)=i^{*}(f)$.
\end{lem}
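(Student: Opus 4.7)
The plan is to construct $Y$ as a homotopy pullback in $\cD$, then verify the desired properties using standard recollement identities, and finally estimate the t-level of $Y$ by passing to a glued t-structure in the preferred equivalence class.

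First, choose t-structures in the preferred equivalence classes on $\cD_F$ and $\cD_U$, and let $(\Dl_{\mathrm{gl}}, \Dg_{\mathrm{gl}})$ be the t-structure on $\cD$ obtained by gluing them. By Corollary~\ref{importantcorollary} this glued t-structure is in the preferred equivalence class on $\cD$, hence equivalent to the given t-structure $\dtstr$; fix an integer $C > 0$ with $\Dl[-C] \subset \Dl_{\mathrm{gl}} \subset \Dl[C]$. I plan to set $B := A + 2C + 1$.

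Next, given $f' : i^{*}K \to i^{*}X$, I pass by the $i^{*} \dashv i_{*}$ adjunction to a morphism $h : K \to i_{*}i^{*}X$, namely the composite of the unit $K \to i_{*}i^{*}K$ with $i_{*}(f')$. Let $g : X \to i_{*}i^{*}X$ be the unit. Define $Y$ as the homotopy pullback of $h$ and $g$, so that $Y$ sits in an exact triangle
\begin{equation*}
  Y \xrightarrow{(\psi,\, f)} K \oplus X \xrightarrow{(h,\, -g)} i_{*}i^{*}X.
\end{equation*}
The projections $\psi : Y \to K$ and $f : Y \to X$ are the candidate maps. Applying $i^{*}$ and using that $i^{*}i_{*} \cong \id$ because $i_{*}$ is fully faithful (see~\ref{def:rec}), the triangle becomes
\begin{equation*}
  i^{*}Y \xrightarrow{(i^{*}\psi,\, i^{*}f)} i^{*}K \oplus i^{*}X \xrightarrow{(f',\, -\id)} i^{*}X,
\end{equation*}
whose third map is a split epimorphism. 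Hence the triangle splits, $i^{*}(\psi)$ is an isomorphism, and the vanishing of the composite gives $f' \circ i^{*}(\psi) = i^{*}(f)$, exactly as required.

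It remains to bound $Y$ in the t-structure. Rotating the defining triangle yields
\begin{equation*}
  \Sigma^{-1} i_{*}i^{*}X \longrightarrow Y \longrightarrow K \oplus X.
\end{equation*}
Since $K, X \in \Dl[A]$, the sum $K \oplus X \in \Dl[A]$. For the other term, from $X \in \Dl[A] \subset \Dl_{\mathrm{gl}}[A+C]$ and the fact that $i^{*}$ is right t-exact and $i_{*}$ is t-exact with respect to the glued t-structure (\ref{def:texact}), I conclude $i_{*}i^{*}X \in \Dl_{\mathrm{gl}}[A+C] \subset \Dl[A+2C]$, hence $\Sigma^{-1} i_{*}i^{*}X \in \Dl[A+2C+1]$. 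Both outer terms of the triangle above then lie in $\Dl[A+2C+1]$, so $Y \in \Dl[B]$ with $B = A+2C+1$, completing the proof. The only delicate point is the final t-level estimate, since the given t-structure is only equivalent (not equal) to the glued one; this is handled by the constant $C$ arising from Corollary~\ref{importantcorollary}.
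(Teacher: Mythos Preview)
Your proof is correct and takes a genuinely different, more direct route than the paper. The paper first invokes \cite[Proposition~4.4.1]{MR1812507} to represent $f'$ by a roof $K\xleftarrow{\varphi} Y\xrightarrow{\tilde f} X$ with $Y$ \emph{compact}; the cone of $\varphi$ then has the form $j_!Z$, and the weak approximability of $\cD_U$ is used to truncate $Z$ and push $Y$ down into a fixed $\Dl[B]$. You bypass all of this by taking $Y$ to be the homotopy fiber of $(h,-g):K\oplus X\to i_*i^*X$: applying $i^*$ and using $i^*i_*\cong\id$ makes the third map a split epimorphism, so the triangle splits and $i^*\psi$ is an isomorphism with $f'\circ i^*\psi=i^*f$; the bound on $Y$ then follows immediately from the t-exactness properties of $i^*$ and $i_*$ for the glued t-structure. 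Your argument never uses the compactness of $K$, and it uses the weak approximability of $\cD_U$ only indirectly through Corollary~\ref{importantcorollary}. The price is that your $Y$ is not compact, but the lemma does not ask for that, and the subsequent lemmas in the paper do not need it either. The only place to tighten the write-up slightly is the sentence ``the triangle splits, $i^*(\psi)$ is an isomorphism'': it is worth saying explicitly that the split epimorphism forces the connecting map $i^*X\to\Sigma\,i^*Y$ to vanish, whence the map $(\id,f'):i^*K\to i^*K\oplus i^*X$ lifts to a two-sided inverse of $i^*\psi$.
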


\begin{proof}
  Choose t-structures for each of $\cD_F,\ \cD_U$, in the preferred equivalence
  classes,
    and glue them to form a t-structure on $\cD$. By
    Corollary~\ref{importantcorollary} the glued t-structure on $\cD$ is 
    in the preferred equivalence class, hence equivalent to the given
    t-structure on $\cD$. Since replacing the given t-structure by
    the equivalent glued one is harmless,
    let us assume that the
t-structure on $\cD$ is the one obtained from the gluing.

Now choose a compact generator $G\in\cD$ and a compact generator $H\in\cD_U$.
The object $G\oplus j_!H$ is a compact generator for $\cD$,
and Lemma~\ref{lem:gen} permits us to choose an integer
$C>0$ with $\Hom{\cD}{G\oplus j_!H}{\cD^{\leq-C}}=0$.
Assume also that $C\geq A$, where $A>0$ is the integer given
in the Lemma.
By the weak approximability of $\cD_{U}$ and~Lemma~\ref{rem:wa} we may,
possibly at the cost of increasing $C$, assume 
that every object $Z \in \Dl_{U}$ admits an exact triangle $E \to Z \to D$ with
  $E \in \o{\la H \ra}^{[-C,C]}$ and
$D \in \Dl[-1]_{U}$. We assert that in the Lemma we may set $B=3C-1$.

By \cite[Proposition 4.4.1]{MR1812507}, we can
  represent $f':i^{*}K\rightarrow i^{*}X$ in $\cD_F$ by a
  roof in $\cD$,
  \begin{displaymath}
    \begin{tikzcd}
      K& Y \ar[l, "\varphi"'] \ar[r, "\t f"] &X,
    \end{tikzcd}
  \end{displaymath}
  such that $Y$ is compact, $i^{*}(\varphi)$ is an isomorphism in $\cD_F$
  and $f' i^{*}(\varphi)=i^{*}(\t f)$. Since $i^{*}(\varphi)$ is an
  isomorphism, we can find an exact triangle $j_{!}Z\rightarrow Y\xrightarrow{\varphi} K$ for some $Z\in \cD_U$.  Moreover, we know $Y\in\cD^c\subset\cD^-$ and $K\in\Dl[A]$. Hence $j_{!}Z\in\Dl[N]$ for some
  $N$, and if $N\leq 3C-1$ we are done.
  Assume therefore $N\geq 3C$.  The isomorphism
  $Z\cong j^{*}j_{!}Z$ tells us that
  $Z\in\Dl[N]_U$. By \cite[2.2.1]{ApproxAmnon}
  (with $m =N+1-3C$ and $F = \Sigma^{N} Z$), there is an exact triangle
  $E\rightarrow Z\rightarrow D$ in $\cD_U$ with
  $E \in \o{\la H\ra}^{[2C,N+C]}$ and
  $D \in \Dl[3C-1]_{U}$. We have the diagram
  \begin{displaymath}
    \begin{tikzcd}
      j_{!}E\arrow{r} &j_{!}Z\arrow{d}\arrow{r}
      &j_{!}D\\
      & Y \ar[d, "\varphi"']\\
      &K
    \end{tikzcd}
  \end{displaymath}
which we complete to an octahedron,  giving an object $Y'$ and exact
  triangles $j_{!}E\rightarrow Y\rightarrow Y'$ and $j_{!}D\rightarrow
  Y'\xrightarrow{\psi} K$ in $\cD$, making the following diagram commutative:
  \begin{displaymath}
    \begin{tikzcd}
      j_{!}E\arrow{r}\arrow[dotted]{rd} &j_{!}Z\arrow{d}\arrow{r}
      &j_{!}D\arrow[dotted]{dd}\\
      & Y \ar[d, "\varphi"']  \ar[dr,dotted]&\\
      &K &Y'\arrow[dotted]{l}{\psi}.
    \end{tikzcd}
  \end{displaymath}
  Using that $\Hom {\cD} {j_!H} {\Dl [-C]} = 0$, we see
  that there are no nonzero maps from
  $j_{!}E\in \o{\la j_{!}H\ra}^{[2C,N+C]}$ to
  $X\in\Dl[A]\subset\Dl[C]$.
  Hence the morphism $\t f: Y \rightarrow X$ factors via
  $Y\rightarrow Y'$:
    \begin{displaymath}
    \begin{tikzcd}
      j_{!}E\arrow{r}\arrow[dotted]{rd} &j_{!}Z\arrow{d}\arrow{r}
      &j_{!}D\arrow[dotted]{dd}\\
      & Y \ar[d, "\varphi"']  \ar[dr,dotted] \ar[rr, bend left = 20, "\t
      f" near end] &&X\\
      &K &Y'\ar[l, dotted, "\psi"]
\ar[ur, bend right = 20, dotted, "f"']
      %\ar[ur, \bend right = 20, "\t f'" near end].
    \end{tikzcd}
  \end{displaymath}
  We have thus found morphisms
  \begin{displaymath}
    \begin{tikzcd}
      K& Y' \ar[l, "\psi"'] \ar[r, "f "] &X,
    \end{tikzcd}
  \end{displaymath}
  such that $i^{*}(\psi)$ is an isomorphism in $\cD_F$ and
  $f' i^{*}(\psi)=i^{*}(f)$. Furthermore, the exact
  triangle $j_{!}D\rightarrow Y'\rightarrow K$ with
  $j_{!}D \in \Dl[3C-1]$ and $K\in \Dl[A]$ shows
  $Y'\in \Dl[3C-1]$. This proves the lemma.
\end{proof}

\begin{lem}\label{lem:case-of-n-equals-1}
  Consider a recollement that satisfies the hypotheses
  of~Corollary~\ref{importantcorollary}.
  Choose a compact generator $G\in\cD$ and let $\dtstr$ be
  a t-structure in the preferred equivalence class on $\cD$.
  
  For every integer $A>0$ there exists an integer $B>0$
  such that, if $f':E'\rightarrow i^{*}X$ is a morphism in $\cD_{F}$
  with $E'\in \Coprod_1 (i^*G[-A,A])$ and $X\in\Dl[A]$, then there
  exists a morphism $f:E\rightarrow X$ in $\cD$ with
  $E\in \o{\la G \ra}^{[-B,B]}$ such that $i^{*}E\cong E'$ and
  $i^{*}f\cong f'$.
  % and an isomorphism $\varphi: i^{*}E\cong E'$ such that
  % $i^{*}(f)=f' \varphi$.\\
  
  If $\cD_{U}$ is approximable, then the integer $B$ may be chosen so that
  $E\in \o{\la G \ra}_{B}^{[-B,B]}.$
\end{lem}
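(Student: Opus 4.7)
My plan is to construct $E$ directly as the fiber of a carefully chosen morphism $K\to j_!E^U[1]$, where $K:=\coprod_\alpha\Sigma^{-i_\alpha}G$ provides the obvious lift $i^*K\cong E'$ (using that $i^*$ preserves coproducts) and $j_!E^U$ is an auxiliary object designed to kill the obstruction to lifting $f'$ from $\cD_F$ to $\cD$. By Corollary~\ref{importantcorollary} and Lemma~\ref{lem:stablegl}, I may replace the given t-structure on $\cD$ by the glued one coming from preferred-class t-structures on $\cD_F,\cD_U$, which does not affect the assertion. Fix a compact generator $H$ of $\cD_U$; since $j_!$ preserves compactness, $j_!H$ is compact in $\cD$, so classical generation \cite[Lemma~0.9]{ApproxAmnon} yields an integer $A'>0$ with $j_!H\in\o{\la G\ra}_{A'}^{[-A',A']}$, and Lemma~\ref{lem:gen}(1) yields $C>0$ with $\Hom{\cD}{G}{\cD^{\leq -C}}=0$.

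Under $i^*\dashv i_*$ the morphism $f'$ corresponds to some $\bar f:K\to i_*i^*X$, and composition with the connecting map $d:i_*i^*X\to j_!j^*X[1]$ of the recollement triangle produces the obstruction $\alpha:=d\circ\bar f\in\Hom{\cD}{K}{j_!j^*X[1]}$ to lifting $\bar f$ through $X\to i_*i^*X$. To kill $\alpha$, I iterate the approximation triangle of Definition~\ref{def:wa}(2) applied to $j^*X\in\cD_U^{\leq A}$ a total of $M:=2A+C$ times, producing an exact triangle $E^U\to j^*X\to D^U$ in $\cD_U$ with $E^U\in\o{\la H\ra}^{[a,b]}$ (finite range depending only on $A$, $M$, and the weak-approximability constant of $\cD_U$) and $D^U\in\cD_U^{\leq A-M}$. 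The choice $M=2A+C$ guarantees $\Hom{\cD}{K}{j_!D^U[1]}=0$: indeed $j_!$ is right t-exact (\ref{def:texact}), so $j_!D^U[1+i_\alpha]\in\cD^{\leq A-M-1-i_\alpha}\subseteq\cD^{\leq -C}$ for every $i_\alpha\in[-A,A]$, to which the vanishing $\Hom{\cD}{G}{\cD^{\leq -C}}=0$ applies. Applying $j_!$ to $E^U\to j^*X\to D^U$, shifting by one, and applying $\Hom{\cD}{K}{-}$ then lifts $\alpha$ along $j_!E^U[1]\to j_!j^*X[1]$ to $\t\alpha\in\Hom{\cD}{K}{j_!E^U[1]}$.

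I then set $E:=\mathrm{fib}(\t\alpha)$, producing the triangle $j_!E^U\to E\to K\xrightarrow{\t\alpha}j_!E^U[1]$. Applying $i^*$ and using $i^*j_!=0$ yields the required iso $i^*E\xrightarrow{\sim}i^*K\cong E'$. The composite $E\to K\xrightarrow{\t\alpha}j_!E^U[1]$ vanishes by construction, hence so does $E\to K\xrightarrow{\alpha}j_!j^*X[1]$; this vanishing produces a lift $f:E\to X$ of $\bar f\circ(E\to K)$ through $X\to i_*i^*X$, and a routine adjunction chase confirms $i^*f\cong f'$ under the iso $i^*E\cong E'$. For the range bound, $\Sigma^{-i}j_!H\in\o{\la G\ra}_{A'}^{[i-A',i+A']}$ implies $j_!E^U\in\o{\la G\ra}^{[a-A',b+A']}$, and combining with $K\in\o{\la G\ra}_1^{[-A,A]}$ in the defining triangle yields $E\in\o{\la G\ra}^{[-B,B]}$ for $B:=\max(A,|a|+A',|b|+A')$. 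In the approximable case each step of the iterated approximation has bounded extension depth, giving $E^U\in\o{\la H\ra}^{[a,b]}_k$ for some $k$ depending only on $M$ and the approximability constant of $\cD_U$; passing through $j_!$ and taking one final extension then yields $E\in\o{\la G\ra}_B^{[-B,B]}$ after enlarging $B$. The main bookkeeping obstacle will be to verify that the integers $a,b,k,B$ depend only on $A$ and on constants intrinsic to $\cD,\cD_F,\cD_U$.
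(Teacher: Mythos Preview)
Your argument is correct and takes a genuinely different route from the paper's. The paper first reduces to a single shift $E'=\Sigma^n i^*G$, then invokes Lemma~\ref{lem:gap}---which in turn rests on the Verdier-quotient roof machinery of \cite[Proposition~4.4.1]{MR1812507}---to produce a roof $\Sigma^nG \xleftarrow{\psi} Y \to X$ representing $f'$ with $Y$ bounded; the cone of $\psi$ has the form $j_!Z$, and the paper approximates this auxiliary $Z$ in $\cD_U$ and takes $E$ to be the fiber of the resulting factorization $\Sigma^nG\to j_!\tilde E$. You instead bypass Lemma~\ref{lem:gap} entirely: using the adjunction $i^*\dashv i_*$ and the recollement triangle for $X$, the adjoint $\bar f:K\to i_*i^*X$ has an explicit obstruction class $\alpha=d\circ\bar f\in\Hom{\cD}{K}{j_!j^*X[1]}$ to lifting through $X\to i_*i^*X$, and you kill $\alpha$ by approximating $j^*X$ itself (rather than an auxiliary $Z$) deep enough that $\Hom{\cD}{K}{j_!D^U[1]}=0$. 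Your approach is more self-contained (no appeal to \cite[4.4.1]{MR1812507} or to Lemma~\ref{lem:gap}), treats the entire coproduct $K$ at once without reducing to single summands, and makes the obstruction-theoretic structure of the problem transparent. The paper's approach has the virtue of isolating Lemma~\ref{lem:gap} as a standalone tool and of approximating a potentially ``smaller'' object $Z$ (the discrepancy between $Y$ and $\Sigma^nG$), though this does not translate into materially sharper bounds.
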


\begin{proof}
Corollary~\ref{importantcorollary} allows us to
  choose t-structures for $\cD_F,\ \cD_U$ in the preferred equivalence classes,
  glue them to form a t-structure on $\cD$, and replace the given
  t-structure on $\cD$ by the equivalent glued one we have just constructed.

Now choose a compact generator  $H\in\cD_U$.
The object $G$ is a compact generator for $\cD$,
and Lemma~\ref{lem:gen}(1) premits us to choose an integer
$C>0$ with $\Hom{\cD}{G}{\cD^{\leq-C}}=0$. Since
$G$ is a compact generator in $\cD$ and $j_!H$ is compact,
\cite[Lemma 0.9(i)]{ApproxAmnon} allows us to
increase $C$ so that $j_!H\in\o{\la G\ra}^{[-C,C]}_C$.
Assume also that $C\geq A$, where $A>0$ is the integer given
in the Lemma. Let $A'>0$ be an integer with $G\in\cD^{\leq A'}$;
Lemma~\ref{lem:gap} permit us to produce an integer $B'>0$ so that
any morphism $i^*K\to i^*X$, with $K\in\cD^c\cap\cD^{\leq A+A'}$
and $X\in\cD^{\leq A+A'}$, can be represented by a roof with $Y\in\cD^{\leq B'}$;
at the cost of possibly increasing $C$ assume $B'\leq C$. 
Finally the weak approximability (respectively approximability)
of $\cD_{U}$ and~Lemma~\ref{rem:wa} permits us,
possibly at the cost of increasing $C$, to assume 
that every object $Z \in \Dl_{U}$ admits an exact triangle $E \to Z \to D$ with
$D \in \Dl[-1]_{U}$ and   $E \in \o{\la H \ra}^{[-C,C]}$ (respectively
$E \in \o{\la H \ra}_C^{[-C,C]}$).
We assert that in the Lemma we may set $B=\max(4C,3C^3+1)$.

  It suffices to show the lemma for
  $E' = \Sigma^{n} i^*G$ with $n \in [-A, A]$. Consider a morphism
  $f':\Sigma^{n} i^*G\to i^{*} X$ in $\cD_F$ with
  $X\in\Dl[A]$. By the choice of $C$ there exists an object $Y$ in
  $\Dl[C]$ and morphisms
  \begin{displaymath}
    \begin{tikzcd}
      & Y \ar[ld, "\psi"'] \ar[rd, "\t f"] &\\
      \Sigma^{n} G & & X,
    \end{tikzcd}
  \end{displaymath}
  such that $i^{*}(\psi)$ is an isomorphism in $\cD_F$ and
  $f'i^{*}(\psi)=i^{*}(\t f)$. In particular, we can find an exact triangle
  $Y\xra{\psi} \Sigma^n G\to j_{!} Z$
for some $Z\in \cD_U$.
 Since $\Sigma^{n} G$ and $Y$ are both in $\Dl [C],$ so is $j_{!} Z$.
  Hence $Z\cong j^{*} j_{!} Z\in\Dl [C]_{U}$.
  %, using that thet-structure on $\cD$ is glued. 
  By \cite[2.2.1 and 2.2.2]{ApproxAmnon}
  (with $m = 3C$ and $F=\Sigma^CZ$), there exists an exact triangle in
  $\cD_U$
  \begin{displaymath}
    \t E \to Z \to \t D,
  \end{displaymath}
  with
  $\t D \in \Dl [-2C]_{U}$ and $\t E\in \o{\la H \ra}^{[1-3C, 2C]}$, or
  if $\cD_U$ is approximable with
  $\t E\in \o{\la H \ra}^{[1-3C, 2C]}_{3C^2}$.
  We thus get the following diagram in $\cD$:
  \begin{displaymath}
    \begin{tikzcd}
      & Y \ar[d, "\psi"'] %\ar[rd, "\t f"] &
      \\
      &\Sigma^{n} G\ar[d] & %& X,
      \\
      j_{!} \t E\ar[r] & j_{!} Z \ar[r] & j_{!} \t D .&{}
    \end{tikzcd}
  \end{displaymath}
  %where the bottom row is an exact triangle.  
  Since $j_{!}$ is
  right t-exact we have $j_{!} \t D \in \Dl [-2C]$. Since
  $\Hom {\cD} G {\Dl [-C]} = 0$ there
  are no nonzero maps $\Sigma^{n}G \to j_{!} \t D$. 
  It follows that $\Sigma^{n}G \to j_!Z$ factors via $j_{!} \t E \to j_!Z$.
  We thus find a morphism $\Sigma^{n} G \to j_{!} \t E$,
  an exact triangle
  $E\xra{\varphi} \Sigma^{n} G \to j_{!} \t E$ and a morphism
  $E\xra{g} Y$ such that the following diagram commutes:
  \begin{displaymath}
    \begin{tikzcd}
      & Y \ar[d, "\psi"'] %\ar[rd, "\t f"] & 
      & E\ar[l, dotted, "g"] \ar[ld, dotted, "\varphi"]
      \\
      &\Sigma^{n} G\ar[dl, dotted] \ar[d]  
      %& X.
      \\
      j_{!} \t E\ar[r] & j_{!} Z \ar[r] & j_{!} \t D &{}
    \end{tikzcd}
  \end{displaymath}
  Since $j_{!}H\in\o{\la G\ra}^{[-C,C]}_C$ and
  $\t E$ is in $\o{\la H \ra}^{[1-3C, 2C]}$ (respectively
  $\t E\in\o{\la H \ra}^{[1-3C, 2C]}_{3C^2}$),
  we see that
  $j_{!}\t E\in \o{\la G\ra}^{[1-4C,3C]}$ (respectively
  $j_!\t E\in\o{\la G \ra}^{[1-4C, 3C]}_{3C^3}$).
  The triangle
	$$E\to \Sigma^n G\to j_{!}\t E$$ now shows that
  $E \in \o {\la G\ra}^{[-4C, 4C]}$ (respectively
  $E \in \o {\la G\ra}^{[-4C, 4C]}_{3C^3+1}$).
	Finally we let $f:=\t f g: E\to X$.
	% the roof $\Sigma^{n} G\xla{\varphi} E \xra{\t f g} X$ also
	% represents~$f'$. Indeed,
	Now $i^{*}(\varphi)$ is an isomorphism in $\cD_F$ and
      $$f'i^{*}(\varphi)=f'i^{*}(\psi)i^{*}(g)=i^{*}(\t fg)=i^{*}(f).$$
      \end{proof}

      \begin{lem}\label{lem:case-of-arb-n}
      	Consider a recollement that satisfies the hypotheses
        of~Corollary~\ref{importantcorollary} and suppose $\cD_U$ is
        approximable. Choose a compact generator $G\in\cD$ and
        let $\dtstr$ be
        a t-structure in the preferred equivalence class on $\cD$.

        For every pair of integers
        $A>0$ and $n>0$ there exists an integer $B>0$ such that, if
        $f':E'\rightarrow i^{*}X $ is a morphism in
  $\cD_{F}$ with $E'\in \Coprod_n (i^*G[-A,A])$ and $X\in\Dl[A]$, then
  there exists a morphism $f:E\rightarrow X$ in $\cD$ with
  $E\in \o{\la G \ra}_B^{[-B,B]}$ such that $i^{*}E\cong E'$ and
  $i^{*}f\cong f'$.
\end{lem}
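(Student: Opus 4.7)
This statement is the inductive strengthening of Lemma~\ref{lem:case-of-n-equals-1}, so I would proceed by induction on $n$, with that lemma as the base case $n=1$. For the inductive step, I use the definition $\Coprod_n(i^*G[-A,A])=\Coprod_1(i^*G[-A,A])*\Coprod_{n-1}(i^*G[-A,A])$ to write $E'$ as an extension
$$E'_1\to E'\to E'_{n-1}$$
in $\cD_F$ with $E'_1\in\Coprod_1(i^*G[-A,A])$ and $E'_{n-1}\in\Coprod_{n-1}(i^*G[-A,A])$, and lift the two ends separately and glue them.

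First I would apply Lemma~\ref{lem:case-of-n-equals-1} to the composite $h':E'_1\to E'\xra{f'} i^*X$, obtaining $h:E_1\to X$ in $\cD$ with $E_1\in\o{\la G\ra}_{B_1}^{[-B_1,B_1]}$, $i^*E_1\cong E'_1$, and $i^*h\cong h'$. Completing $h$ to a triangle $E_1\xra{h} X\to X'\to E_1[1]$ in $\cD$ and using that $G$ lies in $\Dl[A']$ for some fixed $A'$, one finds $X'\in\Dl[A'']$ for a constant $A''$ depending only on $A,A',B_1$. Next, applying the octahedral axiom in $\cD_F$ to $E'_1\to E'\xra{f'} i^*X$, and using that $i^*$ is exact so that $i^*X'$ is identified with the cone of $h'$, I obtain a morphism $g':E'_{n-1}\to i^*X'$ whose composition with the connecting map $i^*X'\to E'_1[1]$ equals the connecting map of the triangle $E'_1\to E'\to E'_{n-1}$. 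By the inductive hypothesis applied to $g'$ (with $A$ replaced by $A''$), there is $g:E_{n-1}\to X'$ in $\cD$ with $E_{n-1}\in\o{\la G\ra}_{B_2}^{[-B_2,B_2]}$, $i^*E_{n-1}\cong E'_{n-1}$, and $i^*g\cong g'$.

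To assemble $f$, set $\phi:=(X'\to E_1[1])\circ g:E_{n-1}\to E_1[1]$ and complete its shift to a triangle $E_1\to E\to E_{n-1}\xra{\phi} E_1[1]$ in $\cD$. Using the iterated inclusion $\Coprod_{B_1}*\Coprod_{B_2}\subseteq\Coprod_{B_1+B_2}$ and monotonicity of the intervals, $E$ lies in $\o{\la G\ra}_B^{[-B,B]}$ for $B:=B_1+B_2$ (with $B\geq\max(B_1,B_2)$ trivially). Since $\phi$ is defined as $g$ composed with the connecting map of $E_1\to X\to X'$, axiom (TR3) produces a morphism $f:E\to X$ completing the morphism of triangles whose outer maps are the identity on $E_1$ and $g$ on $E_{n-1}$. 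Applying $i^*$ and identifying via the chosen isomorphisms, $i^*\phi$ equals the connecting map of $E'_1\to E'\to E'_{n-1}$ by the octahedron above, so by the uniqueness of the third vertex of a triangle one obtains $i^*E\cong E'$; tracing the morphism of triangles through $i^*$ then gives $i^*f\cong f'$ under this identification.

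\textbf{Main obstacle.} The delicate point is the final compatibility $i^*f\cong f'$: the morphism $f$ produced by (TR3) is only determined up to a map $E\to X$ factoring through $E\to E_{n-1}$, so a priori $i^*f$ and $f'$ could differ by such a correction. I would handle this either by constructing $E$ and $f$ simultaneously through a single octahedron argument so that the identifications are built in, or by absorbing the ambiguity in $f$ into a modification of $g$ before invoking the inductive hypothesis. The bookkeeping of the bounds $B_1, B_2, B$ and of all the shifts is routine once the t-structure on $\cD$ has been aligned with the glued one via Corollary~\ref{importantcorollary}.
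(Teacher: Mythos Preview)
Your proposal is correct and follows essentially the same inductive strategy as the paper: decompose $E'$ as an extension of a $\Coprod_1$-piece by a $\Coprod_{n-1}$-piece, lift the first piece via Lemma~\ref{lem:case-of-n-equals-1}, pass to the cone, lift the second piece by induction, and reassemble. The paper differs from your write-up only in the reassembly step: rather than building $E$ from the connecting map $\phi$ and then invoking (TR3) to produce $f$, the paper applies the octahedral axiom directly to the composable pair $X\to X'\to Z$ in $\cD$ (your $X'$ is the paper's $Y$), which yields $E$, the triangle $E_1\to E\to E_{n-1}$, and the map $f:E\to X$ in one stroke. This is precisely the fix you anticipate under ``Main obstacle,'' and it sidesteps the (TR3) ambiguity: since $i^*$ carries the octahedron in $\cD$ to an octahedron in $\cD_F$ built on the same composable pair $i^*X\to Y'\to Z'$ as the one that produced $f'$, the identification $i^*f\cong f'$ follows by comparing fibers of $i^*X\to Z'$.
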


\begin{proof}
Corollary~\ref{importantcorollary} allows us to
  choose t-structures for $\cD_F,\ \cD_U$ in the preferred equivalence classes,
  glue them to form a t-structure on $\cD$, and replace the given
  t-structure on $\cD$ by the equivalent glued one we have just constructed.

  The case $n=1$ is shown in~Lemma~\ref{lem:case-of-n-equals-1}. We
  proceed by induction on $n$, and suppose the lemma holds for
  some $n\geq 1$. Fix $A>0$, choose $\t B$ as
  in~Lemma~\ref{lem:case-of-n-equals-1}, meaning
  any morphism $f':E'\to i^*X$ in the category
  $\cD_F$, with $E'\in\Coprod_1 (i^*G[-A,A])$ and
  $X\in\Dl[A]$,
  is ismorphic to $i^*f$ for some some morphism
  $f:E\rightarrow X$ in $\cD$ with
  $E\in \o{\la G \ra}_{\t B}^{[-\t B,\t B]}$. Increasing $\t B$ if necessary
  assume $\t B\geq A$ and $G\in\cD^{\leq \t B}$.
  
  Using the induction hypothesis choose $B'$ such that,
  if $f':E'\rightarrow i^{*}X $ is a morphism in $\cD_{F}$ with
  $E'\in \Coprod_n (i^*G[-2\t B,2\t B])$ and $X\in\Dl[2\t B]$,
  then there exists a
  morphism $f:E\rightarrow X$ in $\cD$ with
  $E\in \o{\la G \ra}_{B'}^{[-B',B']}$ such that $i^{*}E\cong E'$ and
  $i^{*}f\cong f'$.

  Consider a morphism
  $f':E'\to i^{*}X$ with $E'\in \Coprod_{n+1} (i^*G[-A,A])$ and
  $X\in\Dl[A].$ By definition there exists an exact triangle $E_{1}' \to E' \to E'_{n} $ with
  $E'_{1} \in \Coprod_1 (i^*G[-A,A])$ and
  $E'_{n} \in \Coprod_n (i^*G[-A,A])$. We can complete the octahedron
  on $E_1'\rightarrow E'\rightarrow i^{*}X$ to find exact triangles
  $E'\to i^*X\to Z'$, $E'_1\to i^*X\to Y'$ and $E'_n\to Y'\to Z'$ such that the diagram
  \begin{displaymath}
    \begin{tikzcd}
      E_{1}' \arrow{r}\arrow[dotted]{rd} &E'\ar[d,"f'"]\arrow{r}
      &E_{n}'\arrow[dotted]{dd}\\
      & i^{*}X \ar[d]  \ar[dr,dotted]&\\
      &Z' &Y'\arrow[dotted]{l}
    \end{tikzcd}
  \end{displaymath}
  commutes.
  Since $E'_{1}$ is in $\Coprod_1 (i^*G[-A,A])$ and $X$ is in $\Dl[A]$,
we can apply our choices to find an exact
triangle $E_1\to X\to Y$ in $\cD$ with
  $E_1\in \o{\la G \ra}_{\t B}^{[-\t B,\t B]}$ and such that
  $i^{*}(E_1 \to X \to Y)\cong E_1'\to i^{*}X\to Y'$. Note that
 $$Y\in\Dl[A]*\o{\la G \ra}_{\t B}^{[-\t B-1,\t B-1]}\subseteq \Dl[2\t B].$$
  Next, we consider the morphism $E_n'\to Y'\cong i^{*}Y$ with
  $E'_{n} \in \Coprod_n (i^*G[-A,A])\subset \Coprod_n (i^*G[-2\t B,2\t B])$ and
  $Y\in\Dl[2\t B]$.  By our choices of integers
  there exists an exact triangle $E_n\to Y\to Z$ in
  $\cD$ with $E_n\in \o{\la G \ra}_{B'}^{[-B',B']}$ and such that
  $i^{*}(E_n\to Y\to Z)\cong E_n'\to Y'\to Z'$. We now have the
  following diagram:
    \begin{displaymath}
    \begin{tikzcd}
      E_{1} \arrow{rd} &
      &E_{n}\arrow{dd}\\
      & X \ar[dr]  &\\
      &Z &Y\arrow{l}.
    \end{tikzcd}
  \end{displaymath}
  % \begin{displaymath}
  %   \begin{tikzpicture}[scale=1.3]
  %     \def\openingangle{50} \node (Z') at (0, 0)
  %     {$E_{n}$}; \node (Y') at ($(Z') +(270+\openingangle:1)$)
  %     {$Y$}; \node (X) at ($(Y) +(270-\openingangle:1)$)
  %     {$E_{1}$}; \node (X') at ($(Y') +(270+\openingangle:1)$)
  %     {$Z$}; \draw[white, name path=L1] (X) -- (Y'); \draw[white, name
  %     path=L2] (X') -- (Y); \path[name intersections={of=L1 and L2,
  %       by=pZ}]; \node (Z) at (pZ)
  %     {$X$}; \draw[-latex] (X) -- node[below right] {} (Z);
  %     \draw[-latex] (Z) -- (Y'); \draw[-latex] (X') --
  %     ($(X')!-1!(Y')$); \draw[-latex] (Y') -- node[above right] {}
  %     (X'); \draw[-latex] (Y') --
  %     ($(Y')!-1!(Z)$); \draw[-latex] (Z') -- node[above right] {}
  %     (Y');
  %   \end{tikzpicture}
  % \end{displaymath}
 Completing the octahedron, we find an object $E$ and exact triangles
 $E_{1} \to E \to E_{n}$ and $E \to X \to Z$ in $\cD$,
 so that the following diagram
 is commutative:
     \begin{displaymath}
    \begin{tikzcd}
      E_{1} \arrow{rd} \ar[r, dotted]& E \ar[d, dotted, "f"] \ar[r, dotted]
      &E_{n}\arrow{dd}\\
      & X \ar[dr] \ar[d, dotted] &\\
      &Z &Y\arrow{l}.
    \end{tikzcd}
  \end{displaymath}
  % \begin{displaymath}
  %   \begin{tikzpicture}[scale=1.3]
  %     \def\openingangle{50} \node (Z') at (0, 0)
  %     {$E_{n}$}; \node (Y) at ($(Z') +(270-\openingangle:1)$)
  %     {$E$}; \node (Y') at ($(Z') +(270+\openingangle:1)$)
  %     {$Y$}; \node (X) at ($(Y) +(270-\openingangle:1)$)
  %     {$E_{1}$}; \node (Z) at ($(Y') +(270+\openingangle:1)$)
  %     {$Z$}; \draw[white, name path=L1] (X) -- (Y'); \draw[white, name
  %     path=L2] (X') -- (Y); \path[name intersections={of=L1 and L2,
  %       by=pZ}]; \node (Z) at (pZ)
  %     {$X$}; \draw[-latex] (X) -- node[above left] {} (Y);
  %     \draw[-latex] (X) -- node[below right] {} (Z); \draw[-latex] (Y)
  %     -- node[above left] {} (Z'); \draw[-latex] (Y) -- node[above
  %     right]
  %     {$f$} (Z); \draw[-latex] (Z) -- (Y'); \draw[-latex] (Z) -- (X');
  %     \draw[-latex] (X') --
  %     ($(X')!-1!(Y')$); \draw[-latex] (X') -- node[above right] {}
  %     ($(X')!-1!(Z)$); \draw[-latex] (Y') -- node[above right] {}
  %     (X'); \draw[-latex] (Y') --
  %     ($(Y')!-1!(Z)$); \draw[-latex] (Z') -- node[above right] {}
  %     (Y'); \draw[-latex] (Z') -- ($(Z')!-1!(Y)$);
  %   \end{tikzpicture}
  % \end{displaymath}
Since there are isomorphisms $i^{*}(X\to Y)\cong i^{*}X\to Y'$ and
  $i^{*}(Y\to Z)\cong Y'\to Z'$, there is an isomorphism $i^{*}f\cong f'$.  The lemma follows since
 $$E \in \o{\la G \ra}_{\t B}^{[-\t B,\t B]}*\o{\la G \ra}_{B'}^{[-B',B']}
  \subset \o{\la G \ra}_{\t B+B'}^{[-\t B-B',\t B+B']}.$$
\end{proof}

      \begin{lem}\label{lem:generalcase}
	    	Consider a recollement that satisfies the hypotheses
        of~Corollary~\ref{importantcorollary} and suppose $\cD_U$ is
        approximable. Choose a compact generator $G\in\cD$ and
        let $\dtstr$ be
        a t-structure in the preferred equivalence class on $\cD$.

        For any integer
        $A>0$ there exists an integer $B>0$ such that, if
        $f':E'\rightarrow i^{*}X $ is a morphism in
        $\cD_{F}$ with $E'  \in \o{\la i^*G\ra}_A^{[-A,A]}$
        and $X\in\Dl[A]$, then
  there exists a morphism $f:E\rightarrow X$ in $\cD$ with
  $E\in \o{\la G \ra}_B^{[-B,B]}$ and such that $i^{*}E\cong E'$ and
  $i^{*}f\cong f'$.
\end{lem}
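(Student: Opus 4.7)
The plan is to lift $f' : E' \to i^*X$ to $\cD$ by first embedding $E'$ as a direct summand of an object in $\Coprod_A(i^*G[-A,A])$, applying Lemma~\ref{lem:case-of-arb-n}, and then extracting the desired piece. Following the pattern of the preceding lemmas, I would first replace the given t-structure on $\cD$ by the equivalent t-structure obtained by gluing preferred t-structures on $\cD_F$ and $\cD_U$; this replacement is harmless by Corollary~\ref{importantcorollary}.

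Since $E' \in \o{\la i^*G \ra}_A^{[-A,A]} = \smd(\Coprod_A(i^*G[-A,A]))$, I would choose $F'$ in $\cD_F$ with $\tilde{E}' := E' \oplus F' \in \Coprod_A(i^*G[-A,A])$, and extend $f'$ by zero to the morphism $\tilde{f}' := f' \circ \pi_{E'} : \tilde{E}' \to i^*X$. Applying Lemma~\ref{lem:case-of-arb-n} with $n=A$ to $\tilde{f}'$ produces an integer $B_0$, depending only on $A$, together with a morphism $\tilde{f} : \tilde{E} \to X$ in $\cD$ such that $\tilde{E} \in \o{\la G \ra}_{B_0}^{[-B_0, B_0]}$, $i^*\tilde{E} \cong \tilde{E}'$, and $i^*\tilde{f} \cong \tilde{f}'$. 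The target object $E$ should then be carved out of $\tilde{E}$ as the summand corresponding to $E'$: since $\o{\la G \ra}_{B_0}^{[-B_0, B_0]}$ is closed under direct summands in $\cD$, any splitting $\tilde{E} \cong E \oplus F$ in $\cD$ with $i^*E \cong E'$ would place $E$ in the correct class, and the morphism $f := \tilde{f} \circ \iota_E : E \to X$ would satisfy $i^*f \cong f' \circ \pi_{E'} \circ \iota_{E'} = f'$.

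The splitting amounts to lifting the idempotent $e' := \iota_{E'} \circ \pi_{E'}$ on $i^*\tilde{E} \cong E' \oplus F'$ to an idempotent on $\tilde{E}$, which then splits automatically because idempotents split in triangulated categories with countable coproducts. Producing this idempotent lift is the main obstacle: its obstruction lives in $\Hom{\cD}{\tilde{E}}{j_! j^* \tilde{E} [1]}$, coming from the recollement triangle $j_! j^* \tilde{E} \to \tilde{E} \to i_* i^* \tilde{E}$. I would tackle it either by exploiting the explicit construction of $\tilde{E}$ in the proof of Lemma~\ref{lem:case-of-arb-n}, where $\tilde{E}$ is assembled from controlled octahedra involving $j_!$-pieces, or by replacing $\tilde{E}$ with a modified lift of $\tilde{E}'$ chosen so that the obstruction vanishes; this may force $B$ to be larger than $B_0$, but still to depend only on $A$.
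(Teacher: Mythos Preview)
Your approach has a genuine gap at the idempotent-lifting step, and you correctly identify it as the main obstacle without actually resolving it. Lifting the idempotent $e'$ on $i^*\tilde E$ to an idempotent on $\tilde E$ is not automatic: the recollement triangle only tells you that the obstruction to lifting $e'$ to \emph{some} endomorphism of $\tilde E$ lies in $\Hom{\cD}{\tilde E}{j_!j^*\tilde E[1]}$, and there is no reason for this group to vanish (the object $j^*\tilde E$ lives in a bounded range, not far to the left). Even if you could lift $e'$ to an endomorphism $e$ of $\tilde E$, you would then need $e$ to be an idempotent, which introduces a second obstruction. Your two suggested workarounds---unwinding the inductive construction of $\tilde E$ or modifying the lift---are vague, and it is not clear either leads to a uniform bound $B$ depending only on $A$.

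The paper avoids this difficulty entirely by a different, much shorter route: it invokes \cite[Lemma~1.11]{AmnonStrong}, which gives the containment
\[
\o{\la i^*G \ra}_A^{[-A,A]}\ \subset\ \Coprod_{2A}\big(i^*G[-A-1,A]\big)\ \subset\ \Coprod_{2A}\big(i^*G[-2A,2A]\big).
\]
Thus every $E'\in\o{\la i^*G \ra}_A^{[-A,A]}$ already lies in a $\Coprod_n$ class (with $n=2A$), no summand needed, and Lemma~\ref{lem:case-of-arb-n} applies directly to $f':E'\to i^*X$ itself. The point is that passing to summands is unnecessary: the cited lemma absorbs the $\smd$ at the cost of doubling the number of extension steps.
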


\begin{proof}		
  Fix $A>0$.  In \cite[Lemma 1.11]{AmnonStrong} it is shown that
	$$\o{\la i^*G \ra}_A^{[-A,A]}\subset \Coprod_{2A}
  (i^*G[-A-1,A])\subset \Coprod_{2A}
  (i^*G[-2A,2A]).$$
  By~Lemma~\ref{lem:case-of-arb-n} there
        exists $B>0$ such that if
        $f':E'\rightarrow i^{*}X $ is a morphism in $\cD_{F}$ with
        $E'\in \Coprod_{2A} (G_F[-2A,2A])$ and $X\in\Dl[2A]$, then there exists
        $E\in \o{\la G \ra}_B^{[-B,B]}$ and a morphism
        $f:E\rightarrow X$ in $\cD$ with $i^{*}E\cong E'$ and $i^{*}f\cong f'$.
      \end{proof}
  
 Now that we understand how to lift morphisms in $\cD_F$ to $\cD$, we proceed with the proof of the theorem:

        \begin{proof}[Proof of Theorem \ref{thm:gluing-result}.]
          Let $G_{F}$ and $G_{U}$ be compact generators of $\cD_F$ and
          $\cD_U$, respectively. By \cite[Theorem 4.4.9]{MR1812507}
          there exists a
          compact object $G'$ in $\cD$ such that $G_{F}$ is a direct
          summand of~$i^{*} G'$. It follows that
          $G:= G' \oplus j_{!}  G_{U}$ is a compact generator
          for~$\cD$.
        
          Suppose now that $\Hom {\cD} {\Sigma^{n} G} G = 0$ for
          $n \gg 0$; we're in the situation of
          Corollary~\ref{importantcorollary}, 
 allowing us to
  choose t-structures for $\cD_F,\ \cD_U$ in the preferred equivalence classes,
  glue them to form a t-structure on $\cD$, and we're guaranteed that
  the glued t-structure on $\cD$ is in the preferred equivalence class.
  Choose a compact generator $H\in\cD_U$, and choose an integer $A>0$
  so that $G\in\cD^{\leq A}$, $\Hom{\cD}G{\cD^{\leq-A}}=0$ and
  $j_!H\in{\la G \ra}_{A}^{[-A,A]}$.
 Recalling that $\cD_U$ and $\cD_F$ are
  approximable and remembering~Remark~\ref{rem:wa}
  we may,  by increasing $A$ if necessary, also assume 
   that every object
          $X\in \Dl_U$ admits an exact triangle $E\to X\to D$ with
          $E\in\o{\la H \ra}_{A}^{[-A,A]}$ and 
          $D\in \Dl[-1]_U$,
 and  every object
          $X\in \Dl_F$ admits an exact triangle $E\to X\to D$ with
          $E\in\o{\la i^*G \ra}_{A}^{[-A,A]}$ and
          $D\in \Dl[-1]_F$.  
          By~Lemma~\ref{lem:generalcase} there
          exists an integer $B>0$ such that, if
          $f':E'\rightarrow i^{*}X $ is any morphism in $\cD_{F}$ with
          $E'\in \o{\la i^*G \ra}_{A}^{[-A,A]}$ and $X\in\Dl[A]$,
          then there exists a
          morphism $f: E \to X$ in $\cD$ with
          $E\in \o{\la G \ra}_{B}^{[-B,B]}$ such that $i^{*} f \cong f'.$

          Let $X$ be an object of $\Dl$, hence
          $i^{*} X \in \Dl_{F}.$ We find an exact triangle
$$E'\xra{f'} i^{*}X\to D'$$ with $E'\in \o{\la i^*G \ra}_{A}^{[-A,A]}$ and $D'\in \cD_F^{\leq -1}$.
By the choices of integers above there exists
a morphism $E \xra{f} X $ in $\cD$ with
$E\in \o{\la G \ra}_{B}^{[-B,B]}$ such that
$i^{*}f\cong f' $. Complete $f$ to a triangle $$E \xra{f} X \to D''.$$
Since $E \in \o{\la G \ra}_{B}^{[-B,B]}$ and $X\in \Dl,$ we have
$D'' \in \Dl[A+B],$ and thus $j^{*} D'' \in \Dl[A+B]_{\cU}.$
By~\cite[2.2.1]{ApproxAmnon} (with $F=\Sigma^{A+B}j^*D''$ and $m=A+B+1$), we find an exact triangle
$$\t E\to j^{*} D''\to \t D$$ in $\cD_U$ such that
$\t E\in \o{\la H \ra}_{(A+B+1)A}^{[-A,2A+B]}$ and
$\t D\in \Dl[-1]_U$, obtaining a diagram
$$
\begin{tikzcd}
  & j_{!}\t E \arrow{r} &
  j_{!} j^{*} D'' \arrow{r}\arrow{d}& j_{!} \t D\\
  E \arrow[r,"f"]
  & X \ar[r] & D'' \ar[d]& \\
  && i_{*}i^{*} D''. &
\end{tikzcd}
$$
Completing the octahedron on $j_{!} \t E \to j_{!}j^{*} D'' \to D''$,
we find an object $D$ with exact triangles
$j_{!} \t E \to D''\to D$ and $j_{!}\t D \to D \to i_{*} i^{*} D''$ in $\cD$,
that fit into the following commutative diagram:
$$
\begin{tikzcd}
  j_{!}\t E \ar[r] \ar[dr, dotted] &
  j_{!} j^{*} D'' \ar[r]\ar[d] & j_{!} \t D \ar[dd, dotted]\\
  & D'' \ar[d] \ar[dr, dotted] & \\
  & i_{*}i^{*} D'' & D.\ar[l, dotted]
\end{tikzcd}
$$
We note that $j_{!} \t D \in \Dl[-1]$ since $\t D \in \Dl[-1]_{U}$
and $j_{!}$ is right t-exact. Furthermore, $i_{*}i^{*} D'' \in \Dl [-1],$ since
$i^{*} D'' \cong D' \in \Dl[-1]_{F}$ and $i_{*}$ is t-exact. Thus
$D \in \Dl [-1].$

Next we complete the octahedron on $X \to D'' \to D$, finding an object
$F$ with exact triangles $F \to X \to D$ and
$E \to F \to j_{!} \t E$ in $\cD$, that fit into the following commutative
diagram:
$$
\begin{tikzcd}
  F \ar[rr, dotted] \ar[dr, dotted]&& j_{!}\t E \ar[d] &\\
  E \ar[u, dotted] \ar[r] & X \ar[r] \ar[dr, dotted]& D'' \ar[d]\\
  && D.
\end{tikzcd}
$$
Since $E\in \o{\la G \ra}_{B}^{[-B,B]}$ and
$$j_{!}\t E\in \o{\la j_{!}H \ra}_{(A+B+1)A}^{[-A,2A+B]}\subset
\o{\la G \ra}_{(A+B+1)A^2}^{[-2A,3A+B]},$$
we see that
$F \in \o{\la G \ra}_{(A+B+1)A^2+B}^{[-3A-B,3A+B]}$. Thus
the exact triangle $F \to X \to D$
proves the approximability of $\cD$.
\end{proof}

\section{Gluing dg-categories}

In this section we fix a commutative ring $R$, and assume everything
in sight is $R$-linear. Recall that an \emph{algebraic triangulated
  category} is a triangulated category that is equivalent to the
stable category of a Frobenius exact category. By \cite[Theorem
4.3]{MR1258406} if $\cD$ is a cocomplete algebraic triangulated
category with a single compact generator, then $\cD$ is equivalent to
the derived category of a dg-algebra. Thus, determining which
algebraic triangulated categories are approximable is equivalent to
determining which dg-algebras have an approximable derived category.

If $A$ is a dg-algebra and its derived category $D(A)$ is approximable, then
$H^{n}(A) = \Hom {D(A)} {\Sigma^{-n} A} A = 0$ for $n \gg 0$ by Lemma
\ref{lem:gen}. By \cite[Remark 3.3]{ApproxAmnon},
if $H^{n}(A) = 0$ for all $n > 0,$ then $D(A)$ is
approximable. We start this
section by showing that if $A$ is an ``upper triangular algebra''
constructed from
dg-algebras $B$ and $C,$ and $B, C$ have approximable derived
categories, then so does $A$. This gives examples of
dg-algebras with approximable derived categories and cohomology in
arbitrarily high degree.

\begin{cor}
  Let $B$ and $C$
  be dg-algebras, $M$ a $B \otimes C^{\op}$-module, and $A$ the
  dg-matrix algebra of this data, i.e.,
\begin{displaymath}
A =  \left[
  \begin{array}{rr}
    B & M\\
    0 & C
  \end{array}
\right], \quad \quad d_{A} = \left[
  \begin{array}{rr}
    d_{B} & d_{M}\\
    0 & d_{C}
  \end{array}
\right].
\end{displaymath}
Assume that $M$ is cohomologically bounded above, i.e., $H^{n}(M) = 0$
for $n \gg 0,$ and that $M$ has a
  semi-projective $B$-resolution that is also a $B\otimes C^{\op}$-module. Then
  if $D(B)$ and
  $D(C)$ are approximable, so is $D(A)$. Moreover,
  $D(A)$ satisfies the hypothesis of Proposition \ref{prop:gluing-dcminus}, so
  $D(A)^{-}_{c}$ and $D(A)^{b}_{c}$ are glued from their analogues
  over $B$ and $C$.
\end{cor}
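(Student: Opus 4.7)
The plan is to realize the dg-matrix algebra $A$ as a gluing of $B$ and $C$ along $M$, exhibit the corresponding recollement $\D{C}\to\D{A}\to\D{B}$, apply Theorem~\ref{thm:gluing-result} to deduce approximability of $\D{A}$, and then invoke Proposition~\ref{prop:gluing-dcminus} to describe the intrinsic subcategories.

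For the recollement, the idempotent $e_{B}=\bigl(\begin{smallmatrix}1_{B}&0\\0&0\end{smallmatrix}\bigr)\in A$ is a zero-cocycle with $e_{B}Ae_{B}\cong B$, and the quotient $A/Ae_{B}A\cong C$ recovers $C$ as a dg-algebra via the surjection $\pi:A\to C$, $\bigl(\begin{smallmatrix}b&m\\0&c\end{smallmatrix}\bigr)\mapsto c$. This is the standard setup for the gluing of two dg-categories along a bimodule, and the hypothesis that $M$ has a semi-projective $B$-resolution which is also a $B\otimes C^{\op}$-module is what ensures the derived functors are well-defined and produces the recollement
\[
\D{C}\xrightarrow{i_{*}}\D{A}\xrightarrow{j^{*}}\D{B},
\]
with $i_{*}$ restriction along $\pi$ and $j^{*}\simeq B\otimes_{A}^{\L}(-)$.

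To apply Theorem~\ref{thm:gluing-result}, I take $G=A$ as the compact generator of $\D{A}$ and observe that as a graded $R$-module $A=B\oplus M\oplus C$ with differential respecting the decomposition, so
\[
H^{n}(A)\cong H^{n}(B)\oplus H^{n}(M)\oplus H^{n}(C).
\]
Since $\D{B}$ and $\D{C}$ are approximable, Lemma~\ref{lem:gen} forces $H^{n}(B)=H^{n}(C)=0$ for $n\gg 0$; combined with the hypothesis $H^{n}(M)=0$ for $n\gg 0$, this yields $\Hom{\D{A}}{\Sigma^{-n}A}{A}=0$ for $n\gg 0$, and Theorem~\ref{thm:gluing-result} then gives approximability of $\D{A}$.

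For the gluing description, I would apply Proposition~\ref{prop:gluing-dcminus} with $G=A$. The only nontrivial hypothesis to check is $j^{*}A\in\D{B}^{-}_{c}$. One computes $j^{*}A\simeq e_{B}A\cong B\oplus M$ as a left $B$-module complex; since $B\in\D{B}^{c}\subseteq\D{B}^{-}_{c}$, it suffices to show $M\in\D{B}^{-}_{c}$. The bounded-above semi-projective $B$-resolution places $M$ in $\D{B}^{-}$, and iterating the approximation property of the approximable category $\D{B}$ on successive truncations of $M$ produces, for each $m>0$, an exact triangle $E\to M\to D$ with $E$ compact and $D\in\D{B}^{\leq -m}$; so $M\in\D{B}^{-}_{c}$. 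Proposition~\ref{prop:gluing-dcminus} then yields the desired description. The main technical hurdle is the recollement setup itself: verifying that the semi-projective resolution hypothesis is exactly what guarantees $j^{*}i_{*}=0$ and the fully faithfulness of $i_{*}$, $j_{!}$, $j_{*}$ at the derived level; once in hand, everything else is a routine verification of the hypotheses of the prior results.
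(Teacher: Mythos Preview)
Your recollement is the opposite of the paper's: you use the idempotent $e_B$ to get $\cD_F=\D{C}$, $\cD_U=\D{B}$, whereas the paper invokes Maycock's recollement with $\cD_F=\D{B}$, $\cD_U=\D{C}$. For the approximability assertion this does not matter---Theorem~\ref{thm:gluing-result} is symmetric in $\cD_F$ and $\cD_U$, and your computation $H^n(A)\cong H^n(B)\oplus H^n(M)\oplus H^n(C)$ correctly verifies the Hom-vanishing hypothesis. (Your description $j^*\simeq B\otimes_A^{\L}(-)$ is not right, but your computation $j^*A\cong e_BA\cong B\oplus M$ is.)

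The gap is in the ``moreover'' part. With your recollement, applying Proposition~\ref{prop:gluing-dcminus} requires $j^*A=B\oplus M\in\D{B}_c^-$, hence $M\in\D{B}_c^-$. Your argument for this is wrong: approximability of $\D{B}$ produces, for $X\in\D{B}^{\leq 0}$, a triangle $E\to X\to D$ with $E\in\o{\la B\ra}_A^{[-A,A]}$, but objects of $\o{\la B\ra}_A^{[-A,A]}$ are built from \emph{arbitrary coproducts} of shifts of $B$ and are typically not compact. The definition of $\D{B}_c^-$ requires $E$ compact, which is strictly stronger. In fact the hypotheses of the corollary do not imply $M\in\D{B}_c^-$: take $B=C=k$ a field and $M=k^{(\bN)}$; then $M$ is cohomologically bounded above and semi-projective over $B$, yet $M\notin\D{k}_c^-$ since no compact (i.e.\ finite-dimensional) object can map to $M$ with cone in $\D{k}^{\leq -1}$.

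The paper's choice of recollement sidesteps this completely: there $j^*A\cong C$ by \cite[Lemma~3.11]{MR2821717}, and $C$ is compact in $\D{C}$, so the hypothesis $j^*G\in(\cD_U)_c^-$ is immediate. The asymmetry of the upper-triangular structure matters here, and for the second assertion you must use the recollement with $\cD_U=\D{C}$.
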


\begin{rem}\label{rem:bimod-gluing-over-a-field}
  The condition on the resolution of $M$ is satisfied if $R$ is a field, or more
  generally, if $M$ and $B$ are flat over $R$ as graded
  modules. Indeed, one
  can then construct a semi-projective $B$-resolution of $M$
  using the bar construction, and this resolution retains the right
  $C$-action from~$M$.
\end{rem}

\begin{proof}
 By \cite[\S 3]{MR2821717} (see also \cite{MR2228328}), there is a recollement:
  \begin{displaymath}
    \begin{tikzcd}
      D(B) \arrow[r, "i_{*}"] %, inner sep = 1ex
      & D(A) \arrow[l,shift left = 4.3, "i^{!}"']  \arrow[l,shift
      right = 4.3, "i^{*}"'] \arrow[r, "j^{*}"] & D(C). \arrow[l,shift
      left = 4.3, "j_{*}"'] \arrow[l, shift right = 4.3, "j_{!}"']
    \end{tikzcd}
  \end{displaymath}
Note that $B$ and $C$ are
cohomologically bounded above, since their derived categories are
approximable, and therefore $A$ is also cohomologically bounded
above. Thus we may apply Theorem \ref{thm:gluing-result}, with $A =
G,$ to see $D(A)$ is approximable. By \cite[Lemma 3.11]{MR2821717},
$j^{*}(A) \cong C,$ and thus is in $D(C)^{-}_{c},$ so we may apply~Proposition~\ref{prop:gluing-dcminus}.
\end{proof}

\def\cprime{$'$}

\end{document}